\title{Firefighting with a Distance-Based Restriction}
\author[1]{Andrea C. Burgess}
\author[2]{John Marcoux}
\author[3]{David A. Pike}
\affil[1]{Department of Mathematics and Statistics, University of New Brunswick, Saint John, NB, E2L 4L5, Canada. \texttt{andrea.burgess@unb.ca}}
\affil[2]{Department of Mathematics, Toronto Metropolitan University, Toronto, ON, M5B 2K3, Canada. \texttt{jmarcoux@torontomu.ca}}
\affil[3]{Department of Mathematics and Statistics, Memorial University of Newfoundland, St. John's, NL, A1C 5S7, Canada. \texttt{dapike@mun.ca}}
\date{\today}
\newtheorem{theorem}{Theorem}[section]
\newtheorem{corollary}[theorem]{Corollary}
\newtheorem{lemma}[theorem]{Lemma}
\theoremstyle{definition}
\begin{document}

\maketitle
\medskip

\begin{abstract}
    In the classic version of the game of firefighter, on the first turn a fire breaks out on a vertex in a graph $G$ and then $k$ firefighters protect $k$ vertices. On each subsequent turn, the fire spreads to the collective unburnt neighbourhood of all the burning vertices and the firefighters again protect $k$ vertices. Once a vertex has been burnt or protected it remains that way for the rest of the game. A common objective with respect to some infinite graph $G$ is to determine how many firefighters are necessary to stop the fire from spreading after a finite number of turns, commonly referred to as \textit{containing} the fire. We introduce the concept of \textit{distance-restricted firefighting} where the firefighters' movement is restricted so they can only move up to some fixed distance $d$ per turn rather than being able to move without restriction. We establish some general properties of this new game in contrast to properties of the original game, and we investigate specific cases of the distance-restricted game on the infinite square, strong, and hexagonal grids. We conjecture that two firefighters are insufficient on the infinite square grid when $d=2$, and we pose some questions about how many firefighters are required in general when $d=1$.
\end{abstract}

\section{Introduction} \label{sec:intro}

In this paper we introduce the concept of the distance-restricted firefighter game based on Hartnell's firefighter game introduced in~\cite{hartnell1995firefighter}. The original game is a turn-based game played on a graph where fire is a proxy for some contagion that spreads from a set of `burning' vertices to their collective neighbourhoods on each turn. For our purposes we consider the game to be a two player game where the players are an arsonist and a brigade of firefighters\footnote{Note that this is in contrast to how the problem is phrased in the case of decision problems concerning finite graphs where the convention is that the initial burning vertices are part of the problem statement.}. On turn zero, one or more vertices are selected by the arsonist as the initial burning vertices and some fixed number of firefighters are placed as well. Each firefighter protects a single vertex from the fire, the consequence being that the fire does not spread to these vertices for the remainder of the game. On each subsequent turn, the fire spreads to the unprotected neighbourhoods of the burning vertices and the firefighters protect a new set of vertices. The firefighters are generally trying to achieve a predetermined goal, which in our case will be to reach a point where the fire can no longer spread. This goal is referred to as \textit{containing} the fire and is most relevant to infinite graphs, which will be our focus. We will also need the definition of \textit{average} firefighting from~\cite{messinger2005firefighting, messinger2008average}. Here the number of firefighters is not constant. Instead, there is a fixed sequence of positive integers $[a_0,\ldots,a_{T-1}]$ and the number of firefighters available on turn $t$ is $a_{(t \text{\:mod\:} T)}$. The number of firefighters used is then computed as the average of the $a_i$. For a survey of results on firefighting, we recommend the paper by Finbow and MacGillivray~\cite{finbow2009firefighter} and the recent thesis by Wagner~\cite{wagner2021new}.

In the original game there is no relationship between the firefighters' positions at time $t$ and time $t+1$. Our variation of the rules, which we have dubbed \textit{distance-restricted} or \textit{distance} $d$ firefighting, limits the fire brigade by forcing each firefighter to only move a maximum of some specified distance $d$ between time $t$ and time $t+1$. We now formalize this condition as a bipartite matching problem. Let $d$ be the distance parameter, $k$ be the number of firefighters, and $k_{u,t}$ be the number of firefighters which occupy the vertex $u$ at time $t$. Define \textit{POS}$_t$ as the set of all pairs $(i,u)$ where $u$ is a vertex occupied by at least one firefighter at time $t$ and $i$ is any positive integer between $1$ and $k_{u,t}$ inclusive. Note that the cardinality of \textit{POS}$_t$ will always be $k$. For any nonnegative integer $t$ define the bipartite graph $B_t$ with partite sets \textit{POS}$_t$ and \textit{POS}$_{t+1}$ and with edges between vertices $(i,u) \in \textit{POS}_{t}$ and $(j,v) \in \textit{POS}_{t+1}$ if $d(u,v) \leq d$. Note that this distance in the definition of the edge set can be taken in the original graph or in the subgraph induced by the unburnt vertices. In general we will use the distance in the subgraph induced by the unburnt vertices, but we will occasionally contrast this with the case where the distance is computed in the entire graph. Now, suppose $B_t$ has a perfect matching $M$. Let $(1,u),(2,u),\ldots,(k_{u,t},u)$ be a set of vertices in $B_t$ from the partite set \textit{POS}$_t$, and let $(1,v_1),(2,v_2),\ldots,(k_{u,t},v_{k_{u,t}})$ be the vertices they are matched to by $M$. Then the firefighters which occupy $u$ at time $t$ can move to $v_1,v_2,\ldots,v_{k_{u,t}}$. We repeat this process with a new choice of $u$ until we have exhausted all vertices from \textit{POS}$_t$. At this point all firefighters have been moved to the desired positions taken from \textit{POS}$_{t+1}$. This process can be reversed by iteratively building a matching in $B_t$ using the firefighters' moves. Thus the firefighters have a valid move from time $t$ to time $t+1$ if and only if $B_t$ has a perfect matching. We point out here that it may seem redundant to include duplicate vertices in \textit{POS}$_t$ as vertices are never redefended or defended by more than one firefighter at once in the original game, however there are situations in the distance-restricted game where this is necessary as it can facilitate movement to another vertex.

We note that our definition of the distance-restricted game requires some refinement to make it compatible with instances where the number of firefighters available varies from turn to turn. If the number of firefighters increases by $\ell$ on any given turn, then before the firefighters move, the $\ell$ additional firefighters are placed on vertices that currently have at least one firefighter on them, and then the firefighters move as normal. If the number of firefighters decreases by $\ell$ on any given turn, then before the firefighters move, $\ell$ of them are chosen and removed from play, and then the remaining firefighters move as normal.

We now introduce some notation for defining how many firefighters are required to contain a fire on a graph. In the context of the original game, we define $f(G,u)$ to be the minimum number of firefighters needed to contain a fire that breaks out at the vertex $u$ in the graph $G$ and we refer to this quantity as the \textit{firefighter number} of $G$ for a fire at $u$. If the graph is vertex transitive, like the grids we will be focusing on, we can simply write $f(G)$ since the choice of $u$ does not matter and we refer to this simply as the firefighter number of $G$. We can then analogously define $f_d(G,u)$ by considering the distance $d$ firefighting game and refer to $f_d(G,u)$ and $f_d(G)$ as the distance-restricted firefighter number. The notation $f^*_d(G,u)$ is defined analogously to $f_d(G,u)$, except that the firefighters are allowed to move through the fire.

In the following sections, we first discuss some general properties of distance-restricted firefighting in contrast to the original game. We then continue by determining the value of $f_d(G)$ for various values of $d$ on the infinite square ($G_{\square}$), strong ($G_{\boxtimes}$), and hexagonal ($G_{\hexagon}$) grids. Special attention is given to the infinite square grid as the case when $d=2$ is of particular interest. We conjecture that in this case, two firefighters are insufficient to contain the fire. In Section~\ref{sec:hex} we outline a special case of a conjecture by Messinger~\cite{messinger2005firefighting} pertaining to the value of $f(G_{\hexagon})$ and give a brief overview of how one might attempt to prove this special case and why it is likely easier to prove than Messinger's conjecture. In the final section we pose some questions about some general properties of the game as well as some questions about the game when $d=1$ specifically.

    Our main contribution is giving a sufficiently deep exploration of attempting to determine $f_2(G_{\square})$ in order to motivate it as a potential alternative to Messinger's conjecture. By the end of this paper the status of determining $f_2(G_{\square})$ will be similarly well developed to the case of determining $f(G_{\hexagon})$. Additionally, as we mentioned above, we give a weaker version of Messinger's conjecture which if proven in the affirmative would represent one of only a few pieces of progress towards proving Messinger's conjecture in the affirmative. As these containment problems are notoriously difficult, we believe that showing any such progress towards conjectures of this type is worthwhile. Finally, while we do not explore them here, this work motivates interesting new results for this variant on finite graphs with respect to the complexity of the related decision problem and the behaviour of the expected damage function~\cite{burgess2023distancerestricted}.

\section{General Properties of Distance-Restricted Firefighting} \label{sec:gen}

It is common in pursuit evasion type games such as firefighter to study how and when subgraphs are monotonic with respect to the game being studied. For containment the answer is straightforward. As long as the vertex $u$ where the fire starts in $G$ is included in the vertex set of $H$ then $f(H,u) \leq f(G,u)$. We do not formally prove this here, but it is relatively easy to see that any edges or vertices of $G$ which are missing in $H$ can never help the fire to spread more in $H$ than it does in $G$. However, we will see that this property does not hold in the distance-restricted game. This fact will become apparent shortly after observing some properties of certain infinite grids.

As noted in~\cite{messinger2008average} the infinite square, triangular ($G_{\triangle}$) and strong grids have equal vertex sets, but nested edge sets -- that is $V(G_{\square}) = V(G_{\triangle}) = V(G_{\boxtimes})$ and $E(G_{\square}) \subset E(G_{\triangle}) \subset E(G_{\boxtimes})$. This nesting can be further extended to the infinite Hexagonal grid by noting $V(G_{\hexagon}) = V(G_{\square})$ and $E(G_{\hexagon}) \subset E(G_{\square})$ as illustrated in Figure~\ref{fig:nestedgrids1}. Clearly the illustrated grid is contained in the infinite square grid, and it is easy to observe that the grid in Figure~\ref{fig:nestedgrids1} is isomorphic to the infinite hexagonal grid. 
 
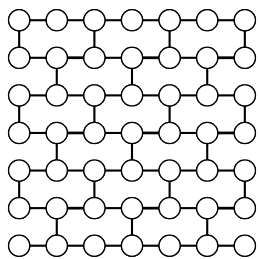
\begin{figure}[H]
    \centering
\begin{tikzpicture}[scale=0.5]
    \GraphInit[vstyle=Classic]
    \SetUpVertex[FillColor=white]

    \tikzset{VertexStyle/.append style={minimum size=8pt, inner sep=1pt}}


    \foreach \y in {0,1,...,6} {\foreach \x in {0,1,...,6} {\Vertex[x=\x,y=\y,NoLabel=true,]{V\x\y}}}
    

    \foreach[count =\i, evaluate=\i as \z using int(\i)] \y in {0,1,...,5} {\foreach \x in {0,1,...,6} {\Edge(V\x\y)(V\x\z)}}
    \foreach[count =\i, evaluate=\i as \z using int(\i)] \x in {0,1,...,5} {\foreach \y in {0,1,...,6} {\Edge(V\x\y)(V\z\y)}}
    

    \tikzset{EdgeStyle/.append style={white}}

    \foreach[count =\i, evaluate=\i as \z using int(\i)] \y in {0,1,...,5} {\foreach[count=\j, evaluate=\j as \u using int(\j)] \x in {0,1,...,5} {\Edge(V\x\y)(V\u\z)}}
    
    
    \tikzset{EdgeStyle/.append style={white}}

    \foreach[count =\i, evaluate=\i as \z using int(\i)] \x in {0,1,...,5} {\foreach[count=\j, evaluate=\j as \u using int(\j-1)] \y in {1,2,...,6} {\Edge(V\x\y)(V\z\u)}}
    
    \tikzset{EdgeStyle/.append style={white, line width = 2pt}}

    \Edge(V00)(V01)
    \Edge(V02)(V03)
    \Edge(V04)(V05)

    \Edge(V11)(V12)
    \Edge(V13)(V14)
    \Edge(V15)(V16)

    \Edge(V20)(V21)
    \Edge(V22)(V23)
    \Edge(V24)(V25)

    \Edge(V31)(V32)
    \Edge(V33)(V34)
    \Edge(V35)(V36)

    \Edge(V40)(V41)
    \Edge(V42)(V43)
    \Edge(V44)(V45)

    \Edge(V51)(V52)
    \Edge(V53)(V54)
    \Edge(V55)(V56)

    \Edge(V60)(V61)
    \Edge(V62)(V63)
    \Edge(V64)(V65)

    \tikzset{EdgeStyle/.append style={black, line width = 1pt}}

    \Edge(V11)(V21)
    \Edge(V31)(V41)
    \Edge(V51)(V61)

    \Edge(V13)(V23)
    \Edge(V33)(V43)
    \Edge(V53)(V63)

    \Edge(V15)(V25)
    \Edge(V35)(V45)
    \Edge(V55)(V65)
    
    \tikzset{VertexStyle/.append style={minimum size=8pt, inner sep=1pt}}
    \foreach \y in {0,1,...,6} {\foreach \x in {0,1,...,6} {\Vertex[x=\x,y=\y,NoLabel=true,]{V\x\y}}}

\end{tikzpicture}
\caption{$7 \times 7$ Portion of the infinite hexagonal grid as a subgraph of the infinite square grid.}
    \label{fig:nestedgrids1}
\end{figure}
    
We make note here that the vertex set of all four of these grids can be thought of as $\mathbb{Z} \times \mathbb{Z}$ with the naturally defined edge sets. This allows us to make use of the idea of positive $y$, negative $y$, positive $x$ and negative $x$ as up, down, right, and left respectively to refer to how the fire and firefighters move in the graph as well as above and below for where the fire and firefighter are with respect to one another. When we draw the grids in figures we will adopt this convention in the expected way. For further definitions from graph theory we direct the reader to Diestel's graph theory text~\cite{diestel2018graphtheory}. In particular, the definition of ray and double ray (page 210) will be used in this paper and we will denote them $P^{\mathbb{N}}$ and $P^{\mathbb{Z}}$ respectively.

    Observe in Figure~\ref{fig:nestedgrids2} that the infinite hexagonal grid contains a subdivision of itself which we will refer to as \textit{sub}($G_{\hexagon}$). The paths of length three formed by the degree two vertices and their neighbours can be replaced with single edges in order to recover the infinite hexagonal grid.

\begin{figure}[H]
    \centering
\begin{tikzpicture}[scale=0.5]
    \GraphInit[vstyle=Classic]
    \SetUpVertex[FillColor=white]

    \tikzset{VertexStyle/.append style={minimum size=8pt, inner sep=1pt}}


    \foreach \y in {0,1,...,6} {\foreach \x in {0,1,...,6} {\Vertex[x=\x,y=\y,NoLabel=true,]{V\x\y}}}
    

    \foreach[count =\i, evaluate=\i as \z using int(\i)] \y in {0,1,...,5} {\foreach \x in {0,1,...,6} {\Edge(V\x\y)(V\x\z)}}
    \foreach[count =\i, evaluate=\i as \z using int(\i)] \x in {0,1,...,5} {\foreach \y in {0,1,...,6} {\Edge(V\x\y)(V\z\y)}}
    

    \tikzset{EdgeStyle/.append style={white, line width = 2pt}}

    \foreach[count =\i, evaluate=\i as \z using int(\i)] \y in {0,1,...,5} {\foreach[count=\j, evaluate=\j as \u using int(\j)] \x in {0,1,...,5} {\Edge(V\x\y)(V\u\z)}}
    
    
    \tikzset{EdgeStyle/.append style={white}}

    \foreach[count =\i, evaluate=\i as \z using int(\i)] \x in {0,1,...,5} {\foreach[count=\j, evaluate=\j as \u using int(\j-1)] \y in {1,2,...,6} {\Edge(V\x\y)(V\z\u)}}
    
    \tikzset{EdgeStyle/.append style={white}}

    \Edge(V00)(V01)
    \Edge(V02)(V03)
    \Edge(V04)(V05)

    \Edge(V11)(V12)
    \Edge(V13)(V14)
    \Edge(V15)(V16)

    \Edge(V20)(V21)
    \Edge(V22)(V23)
    \Edge(V24)(V25)

    \Edge(V31)(V32)
    \Edge(V33)(V34)
    \Edge(V35)(V36)

    \Edge(V40)(V41)
    \Edge(V42)(V43)
    \Edge(V44)(V45)

    \Edge(V51)(V52)
    \Edge(V53)(V54)
    \Edge(V55)(V56)

    \Edge(V60)(V61)
    \Edge(V62)(V63)
    \Edge(V64)(V65)

    \tikzset{EdgeStyle/.append style={white}}

    \Edge(V11)(V21)
    \Edge(V31)(V41)
    \Edge(V51)(V61)

    \Edge(V13)(V23)
    \Edge(V33)(V43)
    \Edge(V53)(V63)

    \Edge(V15)(V25)
    \Edge(V35)(V45)
    \Edge(V55)(V65)
    
    \tikzset{VertexStyle/.append style={minimum size=8pt, inner sep=1pt}}
    \foreach \y in {0,1,...,6} {\foreach \x in {0,1,...,6} {\Vertex[x=\x,y=\y,NoLabel=true,]{V\x\y}}}

\end{tikzpicture}
\caption{$7 \times 7$ Portion of \textit{sub}($G_{\hexagon}$) as a subgraph of the infinite hexagonal grid.}
    \label{fig:nestedgrids2}
\end{figure}

Observe from Figure~\ref{fig:nestedgrids3} that \textit{sub}($G_{\hexagon}$) only requires one firefighter when $d \geq 11$, however \textit{sub}($G_{\hexagon}$) contains a double ray\footnote{For our purposes, consider any set of vertices $\{(x,y) \mid x \in \mathbb{Z}\}$ for an appropriate choice of $y$.} which requires two firefighters for any value of $d$. We note here that there are many diagrams in this article that all follow the same labelling conventions. The firefighters are labelled using square black vertices. The burnt vertices are labelled using circular orange vertices. The vertices where the fire starts are diamond shaped and the vertices where the firefighter starts are star shaped. We can now see that the distance-restricted firefighter number of a subgraph is not bounded above by the distance-restricted firefighter number of the original graph. In fact, Figure~\ref{fig:layered_wheel} and Theorem~\ref{thm:arb_diff_2} show that the ratio and difference of $f_d(G)$ and $f_d(H)$ for $H$ a subgraph of $G$ can both be unbounded. 

\begin{figure}[H]
    \centering
\begin{tikzpicture}[scale=0.5]
    \SetUpVertex[FillColor=white]

    \tikzset{VertexStyle/.append style={minimum size=8pt, inner sep=1pt}}


    \foreach \y in {0,1,...,6} {\foreach \x in {0,1,...,6} {\Vertex[x=\x,y=\y,NoLabel=true,]{V\x\y}}}
    

    \foreach[count =\i, evaluate=\i as \z using int(\i)] \y in {0,1,...,5} {\foreach \x in {0,1,...,6} {\Edge(V\x\y)(V\x\z)}}
    \foreach[count =\i, evaluate=\i as \z using int(\i)] \x in {0,1,...,5} {\foreach \y in {0,1,...,6} {\Edge(V\x\y)(V\z\y)}}
    

    \tikzset{EdgeStyle/.append style={white, line width = 2pt}}

    \foreach[count =\i, evaluate=\i as \z using int(\i)] \y in {0,1,...,5} {\foreach[count=\j, evaluate=\j as \u using int(\j)] \x in {0,1,...,5} {\Edge(V\x\y)(V\u\z)}}
    
    
    \tikzset{EdgeStyle/.append style={white}}

    \foreach[count =\i, evaluate=\i as \z using int(\i)] \x in {0,1,...,5} {\foreach[count=\j, evaluate=\j as \u using int(\j-1)] \y in {1,2,...,6} {\Edge(V\x\y)(V\z\u)}}
    
    \tikzset{EdgeStyle/.append style={white}}

    \Edge(V00)(V01)
    \Edge(V02)(V03)
    \Edge(V04)(V05)

    \Edge(V11)(V12)
    \Edge(V13)(V14)
    \Edge(V15)(V16)

    \Edge(V20)(V21)
    \Edge(V22)(V23)
    \Edge(V24)(V25)

    \Edge(V31)(V32)
    \Edge(V33)(V34)
    \Edge(V35)(V36)

    \Edge(V40)(V41)
    \Edge(V42)(V43)
    \Edge(V44)(V45)

    \Edge(V51)(V52)
    \Edge(V53)(V54)
    \Edge(V55)(V56)

    \Edge(V60)(V61)
    \Edge(V62)(V63)
    \Edge(V64)(V65)

    \tikzset{EdgeStyle/.append style={white}}

    \Edge(V11)(V21)
    \Edge(V31)(V41)
    \Edge(V51)(V61)

    \Edge(V13)(V23)
    \Edge(V33)(V43)
    \Edge(V53)(V63)

    \Edge(V15)(V25)
    \Edge(V35)(V45)
    \Edge(V55)(V65)
    
    \tikzset{VertexStyle/.append style={minimum size=8pt, inner sep=1pt}}
    \foreach \y in {0,1,...,6} {\foreach \x in {0,1,...,6} {\Vertex[x=\x,y=\y,NoLabel=true,]{V\x\y}}}
    
    \tikzset{VertexStyle/.append style={minimum size=12pt, inner sep=0.5pt, diamond, orange, text=black}}

    \Vertex[x=3,y=2,L={\scriptsize 0}]{V32}

    \tikzset{VertexStyle/.append style={minimum size=8pt, inner sep=1pt, circle, orange, text=black}}

    \Vertex[x=3,y=3,L={\scriptsize 1}]{V33}
    \Vertex[x=4,y=2,L={\scriptsize 1}]{V42}

    \tikzset{VertexStyle/.append style={orange, text=black}}

    \Vertex[x=2,y=3,L={\scriptsize 2}]{V23}
    \Vertex[x=4,y=1,L={\scriptsize 2}]{V41}
    
    \tikzset{VertexStyle/.append style={orange, text=black}}

    \Vertex[x=5,y=1,L={\scriptsize 3}]{V41}

    \tikzset{VertexStyle/.append style={minimum size=12pt, inner sep=1pt, star, black, text=white}}

    \Vertex[x=2,y=2,L={\scriptsize 0}]{V22}
    
    \tikzset{VertexStyle/.append style={minimum size=8pt, inner sep=1pt, rectangle, black, text=white}}
    
    \Vertex[x=5,y=2,L={\scriptsize 1}]{V22}
    
    \tikzset{VertexStyle/.append style={black, text=white}}

    \Vertex[x=2,y=4,L={\scriptsize 2}]{V22}
    
    \tikzset{VertexStyle/.append style={black, text=white}}

    \Vertex[x=5,y=0,L={\scriptsize 3}]{V41}

\end{tikzpicture}
    \caption{A strategy to contain the fire in four turns on \textit{sub}($G_{\hexagon}$) when $d=11$. The case where the fire begins on a degree two vertex is trivial and thus omitted.}
    \label{fig:nestedgrids3}
\end{figure}
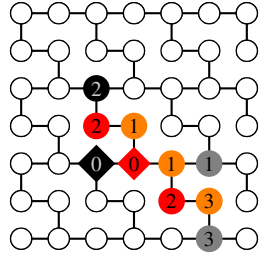

\begin{figure}[H]
        \centering 
        \begin{tikzpicture}[scale=0.5]
		    \SetUpVertex[FillColor=white]
		    
            \tikzset{VertexStyle/.append style={minimum size=8pt, inner sep=1pt}}

		    \Vertex[x=0,y=0,NoLabel=true,]{V00A}
		    
            \Vertex[x=1,y=0,NoLabel=true,]{V01}
		    \Vertex[x=0.31,y=0.95,NoLabel=true,]{V11}
		    \Vertex[x=-0.81,y=0.59,NoLabel=true,]{V21}
		    \Vertex[x=-0.81,y=-0.59,NoLabel=true,]{V31}
		    \Vertex[x=0.31,y=-0.95,NoLabel=true,]{V41}

		    \Vertex[x=2,y=0,NoLabel=true,]{V02}
		    \Vertex[x=0.62,y=1.9,NoLabel=true,]{V12}
		    \Vertex[x=-1.62,y=1.18,NoLabel=true,]{V22}
		    \Vertex[x=-1.62,y=-1.18,NoLabel=true,]{V32}
		    \Vertex[x=0.62,y=-1.9,NoLabel=true,]{V42}
		    
            \tikzset{VertexStyle/.append style={minimum size=1pt, inner sep=0pt}}

		    \Vertex[x=3,y=0,NoLabel=true,]{V03}
		    \Vertex[x=0.93,y=2.85,NoLabel=true,]{V13}
		    \Vertex[x=-2.43,y=1.77,NoLabel=true,]{V23}
		    \Vertex[x=-2.43,y=-1.77,NoLabel=true,]{V33}
		    \Vertex[x=0.93,y=-2.85,NoLabel=true,]{V43}

            \Edge(V00A)(V01)
            \Edge(V00A)(V11)
            \Edge(V00A)(V21)
            \Edge(V00A)(V31)
            \Edge(V00A)(V41)
	
            \Edge(V01)(V11)
            \Edge(V21)(V11)
            \Edge(V21)(V31)
            \Edge(V41)(V31)
            \Edge(V41)(V01)

            \Edge(V02)(V12)
            \Edge(V22)(V12)
            \Edge(V22)(V32)
            \Edge(V42)(V32)
            \Edge(V42)(V02)

            \Edge(V01)(V02)
            \Edge(V11)(V12)
            \Edge(V21)(V22)
            \Edge(V31)(V32)
            \Edge(V41)(V42)

            \tikzset{EdgeStyle/.append style={dashed}}

            \Edge(V03)(V02)
            \Edge(V13)(V12)
            \Edge(V23)(V22)
            \Edge(V33)(V32)
            \Edge(V43)(V42)

		    \tikzset{VertexStyle/.append style={minimum size=8pt, inner sep=1pt}}
            
            \tikzset{EdgeStyle/.append style={solid}}
            
            \tikzset{VertexStyle/.append style={minimum size=8pt, inner sep=1pt}}
		    
            \Vertex[x=7,y=0,NoLabel=true,]{V00}

		    \Vertex[x=8,y=0,NoLabel=true,]{V01}
		    \Vertex[x=7.31,y=0.95,NoLabel=true,]{V11}
		    \Vertex[x=6.19,y=0.59,NoLabel=true,]{V21}
		    \Vertex[x=6.19,y=-0.59,NoLabel=true,]{V31}
		    \Vertex[x=7.31,y=-0.95,NoLabel=true,]{V41}

		    \Vertex[x=9,y=0,NoLabel=true,]{V02}
		    \Vertex[x=7.62,y=1.9,NoLabel=true,]{V12}
		    \Vertex[x=5.38,y=1.18,NoLabel=true,]{V22}
		    \Vertex[x=5.38,y=-1.18,NoLabel=true,]{V32}
		    \Vertex[x=7.62,y=-1.9,NoLabel=true,]{V42}

            \tikzset{VertexStyle/.append style={minimum size=1pt, inner sep=0pt}}

		    \Vertex[x=10,y=0,NoLabel=true,]{V03}
		    \Vertex[x=7.93,y=2.85,NoLabel=true,]{V13}
		    \Vertex[x=4.57,y=1.77,NoLabel=true,]{V23}
		    \Vertex[x=4.57,y=-1.77,NoLabel=true,]{V33}
		    \Vertex[x=7.93,y=-2.85,NoLabel=true,]{V43}

            \Edge(V00)(V01)
            \Edge(V00)(V11)
            \Edge(V00)(V21)
            \Edge(V00)(V31)
            \Edge(V00)(V41)

            \Edge(V01)(V02)
            \Edge(V11)(V12)
            \Edge(V21)(V22)
            \Edge(V31)(V32)
            \Edge(V41)(V42)
		
            \tikzset{VertexStyle/.append style={minimum size=12pt, inner sep=0.5pt, orange, diamond}}
		
		    \Vertex[x=7,y=0,NoLabel=true,]{V00}
		
		    \Vertex[x=0,y=0,NoLabel=true,]{V00A}

            \tikzset{EdgeStyle/.append style={dashed}}

            \Edge(V03)(V02)
            \Edge(V13)(V12)
            \Edge(V23)(V22)
            \Edge(V33)(V32)
            \Edge(V43)(V42)

		\end{tikzpicture}
        \caption{Left: The ball of radius $3$ around the initial burning vertex of $P^{\mathbb{N}}\:\square\:C^5$ modified as described in the proof of Theorem~\ref{thm:arb_diff_2}, Right: An infinite subgraph of the graph on the left with distance-restricted firefighter number $5$.}
        \label{fig:layered_wheel}

    \end{figure}

\begin{theorem} \label{thm:arb_diff_2}
    There exist graphs $G,H$ such that $H$ is a subgraph of $G$ with $u \in V(H) \subseteq V(G)$ such that the value of $\big(f_d(H,u) - f_d(G,u)\big)$ as well as the value of $\Big(\frac{f_d(H,u)}{f_d(G,u)}\Big)$ can be arbitrarily large for any value of $d$.
\end{theorem}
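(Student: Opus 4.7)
The plan is to exhibit, for each positive integer $n$, a pair $(G_n, H_n)$ with $H_n \subseteq G_n$ and a distinguished vertex $u \in V(H_n)$ such that $f_d(H_n, u) = n$ while $f_d(G_n, u) \leq C_d$ for some constant $C_d$ depending only on $d$. Both the ratio and the difference of the two values then grow without bound as $n \to \infty$.

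First I would define $H_n$ as the \emph{spider} obtained by attaching $n$ pairwise disjoint infinite paths to a common endpoint $u$, generalizing the subgraph on the right of Figure~\ref{fig:layered_wheel} from $5$ to $n$ legs. The upper bound $f_d(H_n, u) \leq n$ is immediate from placing $n$ firefighters on the neighbours of $u$ at turn $0$. For the matching lower bound, observe that once $u$ burns, $H_n \setminus \{u\}$ has $n$ connected components, each an infinite path, so any firefighter is trapped in a single leg, and each leg requires at least one firefighter to stop the fire from propagating along it indefinitely.

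Next I would construct $G_n$ by adding cycle edges so that a bounded number of firefighters can contain the fire even though it starts at the universal neighbour $u$. For $d \geq 2$ I would take $G_n$ to be the modified product $P_{0,\infty} \square C_n$ illustrated on the left of Figure~\ref{fig:layered_wheel} (with the $0$-th copy of $C_n$ identified to a single vertex $u$): placing three firefighters on three consecutive vertices of level $1$ forces the fire to spread into a contiguous burning arc at each deeper level, and two ``edge'' firefighters can then shrink the arc by two vertices per turn by combining one matching edge with one cycle edge (a distance-$2$ move), so the arc vanishes after $O(n)$ turns. For $d = 1$ this shortcut is not usable in a single turn, so instead I would lengthen the spokes: let $G_n$ consist of $u$ joined to $n$ hub vertices $h_1, \ldots, h_n$ through pairwise disjoint paths of length $n$, with the hubs forming a cycle $C_n$ that continues outward into an infinite cylinder $C_n \square P_\infty$; a single firefighter starting at $h_1$ can walk once around the hub cycle at one step per turn, protecting every $h_i$ by turn $n-1$, long before the fire reaches the hub level at turn $n+1$. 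In both constructions $H_n$ is the spanning subgraph of $G_n$ obtained by deleting all cycle edges.

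The hard part is verifying the containment strategy on $G_n$: one has to check that every firefighter move is legal under the distance-$d$ constraint measured in the subgraph induced by the unburnt vertices, and that the burning region is eventually fully sealed by protected vertices using only $C_d$ firefighters, independent of $n$. Once this is established, $f_d(H_n, u) - f_d(G_n, u) \geq n - C_d$ and $f_d(H_n, u)/f_d(G_n, u) \geq n/C_d$, both of which are unbounded as $n \to \infty$ for any fixed $d$.
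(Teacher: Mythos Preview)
Your construction is correct and follows the same template as the paper: take the $n$-legged spider as $H_n$ and a cylinder-with-apex as $G_n$. The difference is that you work harder than necessary. In the paper, a single firefighter suffices on $G_n$ for \emph{every} $d\geq 1$, uniformly: start the firefighter on a vertex of the $(m+1)$-th cycle of the cylinder and walk once around that cycle, one step per turn. The fire needs more than $m$ turns to travel from the apex to that level, while the firefighter needs only $m$ turns to protect all $m$ vertices of the cycle, so the barrier closes in time regardless of $d$. This gives $f_d(G,u)=1$ and hence difference $m-1$ and ratio $m$ directly. Your split into $d\geq 2$ and $d=1$, and the three-firefighter arc-shrinking argument for $d\geq 2$, both work, but they are avoidable: the ``distant cycle'' idea you use for $d=1$ already applies verbatim to your $d\geq 2$ graph (just start the lone firefighter at level roughly $n$ instead of level $1$), so no case distinction or diagonal moves are needed.
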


\begin{proof}
    The graph in Figure~\ref{fig:layered_wheel} can be thought of as taking $P^{\mathbb{N}}\:\square\:C^m$ (the Cartesian product\footnote{$G\:\square\:H$ is the graph with vertex set $V(G) \times V(H)$ and edge set $\{(u,v)(x,y) \mid (u=x\land vy \in E(H)) \lor (ux \in E(G) \land v=y)\}$} of a ray and a cycle), adding a new vertex $u$, and making $u$ adjacent to the $m$ vertices corresponding to the end vertex of the ray. This graph has distance-restricted firefighter number $1$ for a fire breaking out at $u$ for any value of $d$ since if we start our firefighter in the $(m+1)^{th}$ cycle then the firefighter can just walk around the cycle and form a barrier. However if we consider the subgraph depicted in Figure~\ref{fig:layered_wheel}, then any fire breaking out at $u$ will require $m$ firefighters to contain the fire no matter the value of $d$. Thus both the difference and the ratio are unbounded for any value of $d$.
\end{proof}

We note that a similar theorem is proved in the finite case for the expected damage property in~\cite{burgess2023distancerestricted} (see Theorem $4.2$).

    This leads us to the question of how large the difference between the firefighter number and the distance-restricted firefighter number can be. 

\begin{theorem} \label{thm:arb_diff}
    The value of $\big(f_d(G,u) - f(G,u)\big)$ as well as the value of $\Big(\frac{f_d(G,u)}{f(G,u)}\Big)$ can be arbitrarily large for any value of $d$.
\end{theorem}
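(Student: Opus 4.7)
The plan is to exhibit, for each positive integer $k$, a graph $G_k$ with distinguished vertex $u$ satisfying $f(G_k,u)=1$ and $f_d(G_k,u)=k$ for every $d\geq 1$. Letting $k$ grow then forces both $f_d(G_k,u)-f(G_k,u)=k-1$ and $f_d(G_k,u)/f(G_k,u)=k$ to be arbitrarily large, uniformly in the choice of $d$.

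I would take $G_k$ to be the \emph{spider graph}: a central vertex $u$ adjacent to $k$ pairwise disjoint infinite rays $R_1,\ldots,R_k$, with $v_{i,j}$ denoting the vertex at distance $j$ from $u$ along $R_i$. For the unrestricted upper bound $f(G_k,u)\leq 1$, I would describe the teleporting strategy in which the single firefighter protects $v_{i+1,i+1}$ on turn $i$ for $i=0,1,\ldots,k-1$. A short induction on $i$ confirms that at turn $i$ the fire has advanced exactly $i$ vertices into each still-active ray, so each newly protected vertex sits one step ahead of the fire in its own ray; after turn $k-1$ every ray is sealed off. Since a single firefighter is plainly necessary, $f(G_k,u)=1$.

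For the distance-restricted case, $f_d(G_k,u)\leq k$ is immediate: place one firefighter on each of the $k$ neighbours of $u$ at turn $0$ and the fire cannot spread at all. The matching lower bound is the main point. As soon as $u$ is burning, the unburnt subgraph of $G_k$ is the disjoint union $R_1\sqcup\cdots\sqcup R_k$; since the definition in Section~\ref{sec:intro} measures firefighter movement distance inside the unburnt subgraph, no firefighter can ever cross from one ray into another. Every firefighter is thus confined for all time to the single ray containing its turn-$0$ placement, and since a fire entering an infinite ray at one end cannot be contained without at least one protected vertex inside that ray, each of the $k$ rays requires its own firefighter. Hence $f_d(G_k,u)\geq k$, and combining bounds gives $f_d(G_k,u)=k$ for every $d\geq 1$.

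The main obstacle is really just this confinement step, whose validity depends crucially on the convention (as opposed to the $f^*_d$ variant) that distance is measured in the unburnt subgraph; if firefighters were allowed to walk through the burning hub, the same example would collapse. Everything else reduces to the routine speed-$1$ chase argument on a single ray.
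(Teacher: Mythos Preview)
Your construction is exactly the graph the paper uses: the right-hand picture in Figure~\ref{fig:layered_wheel} is precisely the spider with $m$ infinite rays emanating from a hub, and the paper's one-line proof invokes the same two facts you spell out, that one teleporting firefighter suffices in the unrestricted game while $m$ are forced in the distance-restricted game because the burning hub disconnects the rays in the unburnt subgraph. Your version is more detailed (the paper says only ``clearly'' and ``requires''), but the idea and the example are identical.
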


\begin{proof}
    Observe the subgraph from the proof of Theorem~\ref{thm:arb_diff_2} as depicted in Figure~\ref{fig:layered_wheel} (right). This graph clearly only requires a single firefighter in the original game regardless of where the fire starts, but requires $m$ firefighters in the distance-restricted game when the fire breaks out at the central vertex. Thus both the ratio and the difference are unbounded here.
\end{proof}

Observe that Theorem~\ref{thm:arb_diff} does not fully extend to $f^*_d$. For example, if we take any graph $G$ and a corresponding containment strategy with $f(G,u)$ firefighters where the maximum distance moved is $\delta$, then $f^*_d(G,u) = f(G,u)$ holds as long as $d \geq \delta$. The case when $d < \delta$ is less clear. We will see later that $f_3(G_{\square}) = f^*_3(G_{\square}) = f(G_{\square})$, but if we take the optimal strategy for containing the fire given in Figure $4$ of~\cite{wang2002fire}, the maximum distance moved is seven. In short, the question is only of interest when $d$ is fixed; in this case the same graphs used in Theorem~\ref{thm:arb_diff} can be used to show the ratio is unbounded. If the firefighter wishes to defend all the branches they will have to move progressively further and further to go from where they just defended on one branch to the boundary of the fire on a currently undefended branch. Thus they will certainly not be able to move to any of the undefended branches eventually if there are enough branches (i.e.~if a large enough cycle is used in the construction).

\begin{corollary}
    For any fixed value of $d$, the value of $\big(f_d(G,u) - f(G,u)\big)$ as well as the value of $\Big(\frac{f_d(G,u)}{f(G,u)}\Big)$ can be arbitrarily large. 
\end{corollary}

Observe as well that something similar can be said of Theorem~\ref{thm:arb_diff_2}. Consider $f^*_d(G,u)$ and $f^*_d(H,u)$ for $H$ a subgraph of $G$. There will exist a value of $d$ where $f^*_d(G,u) = f(G,u)$ and $f^*_d(H,u) = f(H,u)$, and so any properties of their ratio and difference are the same as in the original game. Again though, if we look at a fixed value of $d$ then the construction will work.

\begin{corollary}
    For any fixed value of $d$, there exist graphs $G,H$ such that $H$ is a subgraph of $G$ with $u \in V(H) \subseteq V(G)$ such that the value of $\big(f_d(H,u) - f_d(G,u)\big)$ as well as the value of $\Big(\frac{f_d(H,u)}{f_d(G,u)}\Big)$ can be arbitrarily large.
\end{corollary}

\section{Square Grid} \label{sec:square}

Our consideration of $G_{\square}$ can be divided into two parts: the case of $d=2$ and the case of $d\neq2$. We found the case of $d=2$ to be significantly more complicated than the case of $d\neq2$ so the majority of this section focuses on this case.

\subsection{Square Grid when $\mathbf{d\neq2}$} \label{subsec:square1}

    The case of $d=1$ on the infinite square grid was shown to require four firefighters in~\cite{chen2017continuous,days2019firefighter}. The idea in this case is that in order to stop the fire from burning along the four rays that start at the origin and go in the four cardinal directions, there must be four firefighters, and four firefighters are obviously sufficient because the fire can be trivially surrounded on turn zero.

    The case of $d=3$ also has an easy solution. Figure~\ref{fig:square_d3_2f} portrays a strategy that works with two firefighters. The two firefighters form a barrier by `spiraling clockwise'\footnote{By spiraling we loosely mean that the two firefighters always defend the next two vertices which are about to burn (with respect to the given clockwise or counterclockwise orientation). In certain cases a firefighter will not be able to reach any of these undefended vertices and in this case they will simply move as close as possible to the nearest one.} around the fire until the fire is contained.

\begin{lemma}
    Two firefighters are necessary and sufficient to contain the fire on the infinite square grid in the game with $d \geq 3$.
\end{lemma}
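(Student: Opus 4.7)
The plan is to prove necessity and sufficiency separately. Necessity is the easier direction: any strategy valid in the distance-restricted game is \emph{a fortiori} valid in the unrestricted game, and so $f_d(G,u) \ge f(G,u)$ for every $d$ and every vertex $u$. A single firefighter is insufficient to contain a fire on $G_{\square}$ even in the unrestricted game (for example, starting at the origin the fire spreads each turn to all four infinite straight paths, and one firefighter cannot even keep up with the growth of the perimeter), and hence at least two firefighters are required in the distance-restricted game whenever $d \ge 3$ as well.

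For sufficiency, I would verify that the spiraling strategy depicted in Figure~\ref{fig:square_d3_2f} contains the fire using only two firefighters whenever $d \ge 3$. Placing the fire at the origin, the two firefighters begin on opposite sides of the burnt region and, each turn, protect one new vertex each, collectively tracing out a closed curve that eventually encloses the entire burnt region. I would list the positions of both firefighters turn by turn and check two things: first, that every consecutive pair of positions of the same firefighter is at distance at most $3$ in the subgraph induced by the vertices that are unburnt at the moment of the move; and second, that after a bounded number of turns the spiral closes, so that every burning vertex has all its neighbours either burnt or protected and the fire can no longer spread.

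The main obstacle is the distance check in the \emph{induced} subgraph of unburnt vertices rather than in $G_{\square}$ itself. Two consecutive firefighter positions might be at distance at most $3$ in $G_{\square}$ while every length-$3$ path between them passes through a vertex that has just caught fire, which would force an illegal detour. To handle this I would choose the spiral generously enough that each firefighter always moves along the outer boundary of the growing burnt region, and for each individual move I would exhibit an explicit walk of length at most $3$ consisting only of vertices unburnt at that time. Once both this distance bookkeeping and the closure of the spiral after finitely many turns are verified, containment follows immediately, completing the argument.
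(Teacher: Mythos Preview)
Your proposal is correct and follows essentially the same two-part structure as the paper: necessity via the inequality $f_d(G_\square)\ge f(G_\square)=2$, and sufficiency via the explicit spiral of Figure~\ref{fig:square_d3_2f}. One minor point: in that figure the two firefighters do not start on opposite sides of the fire but rather stay adjacent to one another and spiral around the fire as a pair, so your verbal description should be adjusted when you carry out the turn-by-turn verification; your careful attention to the induced-subgraph distance (which the paper leaves implicit) is appropriate and will go through once the correct bijection between firefighter positions at consecutive turns is chosen.
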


\begin{proof}
    Figure~\ref{fig:square_d3_2f} illustrates a strategy with two firefighters when $d=3$. The two firefighters spiral around the fire until it is contained as previously discussed. Thus two firefighters suffice when $d=3$. For any distance greater than three the same strategy can be played. Thus for any $d \geq 3$ two firefighters suffice.

    One firefighter does not suffice since if one firefighter was sufficient for $d=3$ then one firefighter would be sufficient for the original game which is not true from~\cite{wang2002fire}. 
\end{proof}

\begin{figure}[H]
    \centering
\begin{tikzpicture}[scale=0.5]
    \SetUpVertex[FillColor=white]

    \tikzset{VertexStyle/.append style={minimum size=8pt, inner sep=1pt}}

    \foreach \y in {-6,-5,...,8} {\foreach \x in {-7,-6,...,8} {\Vertex[x=\x,y=\y,NoLabel=true,]{V\x\y}}}
    \foreach[count =\i, evaluate=\i as \z using int(\i-6)] \y in {-6,-5,...,7} {\foreach \x in {-7,-6,...,8} {\Edge(V\x\y)(V\x\z)}}
    \foreach[count =\i, evaluate=\i as \z using int(\i-7)] \x in {-7,-6,...,7} {\foreach \y in {-6,-5,...,8} {\Edge(V\x\y)(V\z\y)}}
    
    \tikzset{VertexStyle/.append style={orange}}
    \tikzset{VertexStyle/.append style={diamond, minimum size = 12pt, text=black, inner sep=0.5pt}}
    
    \Vertex[x=0,y=0,L={\scriptsize 0}]{0}
    
    \tikzset{VertexStyle/.append style={orange, circle, minimum size = 8pt, text=black, inner sep=1pt}}
   
    \Vertex[x=1,y=0,L={\scriptsize 1}]{V00}
    \Vertex[x=0,y=1,L={\scriptsize 1}]{V00}

    \tikzset{VertexStyle/.append style={orange,text=black}}

    \Vertex[x=2,y=0,L={\scriptsize 2}]{V00}
    \Vertex[x=1,y=1,L={\scriptsize 2}]{V00}
    \Vertex[x=1,y=-1,L={\scriptsize 2}]{V00}
    
    \tikzset{VertexStyle/.append style={orange,text=black}}
    
    \Vertex[x=1,y=-2,L={\scriptsize 3}]{V00}
    \Vertex[x=3,y=0,L={\scriptsize 3}]{V00}
    \Vertex[x=2,y=-1,L={\scriptsize 3}]{V00}

    \tikzset{VertexStyle/.append style={orange,text=black}}

    \Vertex[x=1,y=-3,L={\scriptsize 4}]{V00}
    \Vertex[x=3,y=-1,L={\scriptsize 4}]{V00}
    \Vertex[x=2,y=-2,L={\scriptsize 4}]{V00}
    \Vertex[x=0,y=-2,L={\scriptsize 4}]{V00}

    \tikzset{VertexStyle/.append style={orange,text=black}}

    \Vertex[x=-1,y=-2,L={\scriptsize 5}]{V00}
    \Vertex[x=0,y=-3,L={\scriptsize 5}]{V00}
    \Vertex[x=1,y=-4,L={\scriptsize 5}]{V00}
    \Vertex[x=2,y=-3,L={\scriptsize 5}]{V00}

    \tikzset{VertexStyle/.append style={orange,text=black}}

    \Vertex[x=-2,y=-2,L={\scriptsize 6}]{V00}
    \Vertex[x=-1,y=-1,L={\scriptsize 6}]{V00}
    \Vertex[x=-1,y=-3,L={\scriptsize 6}]{V00}
    \Vertex[x=0,y=-4,L={\scriptsize 6}]{V00}
    \Vertex[x=1,y=-5,L={\scriptsize 6}]{V00}

    \tikzset{VertexStyle/.append style={orange,text=black}}

    \Vertex[x=-3,y=-2,L={\scriptsize 7}]{V00}
    \Vertex[x=-2,y=-1,L={\scriptsize 7}]{V00}
    \Vertex[x=-2,y=-3,L={\scriptsize 7}]{V00}
    \Vertex[x=-1,y=-4,L={\scriptsize 7}]{V00}
    \Vertex[x=0,y=-5,L={\scriptsize 7}]{V00}

    \tikzset{VertexStyle/.append style={orange,text=black}}

    \Vertex[x=-4,y=-2,L={\scriptsize 8}]{V00}
    \Vertex[x=-3,y=-1,L={\scriptsize 8}]{V00}
    \Vertex[x=-2,y=0,L={\scriptsize 8}]{V00}
    \Vertex[x=-3,y=-3,L={\scriptsize 8}]{V00}
    \Vertex[x=-2,y=-4,L={\scriptsize 8}]{V00}

    \tikzset{VertexStyle/.append style={orange,text=black}}

    \Vertex[x=-5,y=-2,L={\scriptsize 9}]{V00}
    \Vertex[x=-4,y=-1,L={\scriptsize 9}]{V00}
    \Vertex[x=-2,y=1,L={\scriptsize 9}]{V00}
    \Vertex[x=-3,y=0,L={\scriptsize 9}]{V00}
    \Vertex[x=-4,y=-3,L={\scriptsize 9}]{V00}

    \tikzset{VertexStyle/.append style={orange,text=black,inner sep=0.1pt}}

    \Vertex[x=-2,y=2,L={\scriptsize 10}]{V00}
    \Vertex[x=-3,y=1,L={\scriptsize 10}]{V00}
    \Vertex[x=-4,y=0,L={\scriptsize 10}]{V00}
    \Vertex[x=-5,y=-1,L={\scriptsize 10}]{V00}
    \Vertex[x=-6,y=-2,L={\scriptsize 10}]{V00}

    \tikzset{VertexStyle/.append style={orange,text=black}}

    \Vertex[x=-1,y=2,L={\scriptsize 11}]{V00}
    \Vertex[x=-2,y=3,L={\scriptsize 11}]{V00}
    \Vertex[x=-3,y=2,L={\scriptsize 11}]{V00}
    \Vertex[x=-4,y=1,L={\scriptsize 11}]{V00}
    \Vertex[x=-5,y=0,L={\scriptsize 11}]{V00}
    \Vertex[x=-6,y=-1,L={\scriptsize 11}]{V00}

    \tikzset{VertexStyle/.append style={orange,text=black}}

    \Vertex[x=-1,y=3,L={\scriptsize 12}]{V00}
    \Vertex[x=-2,y=4,L={\scriptsize 12}]{V00}
    \Vertex[x=-3,y=3,L={\scriptsize 12}]{V00}
    \Vertex[x=-4,y=2,L={\scriptsize 12}]{V00}
    \Vertex[x=-5,y=1,L={\scriptsize 12}]{V00}

    \tikzset{VertexStyle/.append style={orange,text=black}}

    \Vertex[x=0,y=3,L={\scriptsize 13}]{V00}
    \Vertex[x=-1,y=4,L={\scriptsize 13}]{V00}
    \Vertex[x=-2,y=5,L={\scriptsize 13}]{V00}
    \Vertex[x=-3,y=4,L={\scriptsize 13}]{V00}
    \Vertex[x=-4,y=3,L={\scriptsize 13}]{V00}

    \tikzset{VertexStyle/.append style={orange,text=black}}

    \Vertex[x=1,y=3,L={\scriptsize 14}]{V00}
    \Vertex[x=0,y=4,L={\scriptsize 14}]{V00}
    \Vertex[x=-1,y=5,L={\scriptsize 14}]{V00}
    \Vertex[x=-2,y=6,L={\scriptsize 14}]{V00}
    \Vertex[x=-3,y=5,L={\scriptsize 14}]{V00}

    \tikzset{VertexStyle/.append style={orange,text=black}}

    \Vertex[x=2,y=3,L={\scriptsize 15}]{V00}
    \Vertex[x=1,y=4,L={\scriptsize 15}]{V00}
    \Vertex[x=0,y=5,L={\scriptsize 15}]{V00}
    \Vertex[x=-1,y=6,L={\scriptsize 15}]{V00}
    \Vertex[x=-2,y=7,L={\scriptsize 15}]{V00}

    \tikzset{VertexStyle/.append style={orange,text=black}}

    \Vertex[x=2,y=2,L={\scriptsize 16}]{V00}
    \Vertex[x=3,y=3,L={\scriptsize 16}]{V00}
    \Vertex[x=2,y=4,L={\scriptsize 16}]{V00}
    \Vertex[x=1,y=5,L={\scriptsize 16}]{V00}
    \Vertex[x=0,y=6,L={\scriptsize 16}]{V00}
    \Vertex[x=-1,y=7,L={\scriptsize 16}]{V00}

    \tikzset{VertexStyle/.append style={orange,text=black}}

    \Vertex[x=3,y=2,L={\scriptsize 17}]{V00}
    \Vertex[x=4,y=3,L={\scriptsize 17}]{V00}
    \Vertex[x=3,y=4,L={\scriptsize 17}]{V00}
    \Vertex[x=2,y=5,L={\scriptsize 17}]{V00}
    \Vertex[x=1,y=6,L={\scriptsize 17}]{V00}

    \tikzset{VertexStyle/.append style={orange,text=black}}

    \Vertex[x=4,y=2,L={\scriptsize 18}]{V00}
    \Vertex[x=5,y=3,L={\scriptsize 18}]{V00}
    \Vertex[x=4,y=4,L={\scriptsize 18}]{V00}
    \Vertex[x=3,y=5,L={\scriptsize 18}]{V00}

    \tikzset{VertexStyle/.append style={orange,text=black}}

    \Vertex[x=4,y=1,L={\scriptsize 19}]{V00}
    \Vertex[x=5,y=2,L={\scriptsize 19}]{V00}
    \Vertex[x=6,y=3,L={\scriptsize 19}]{V00}
    \Vertex[x=5,y=4,L={\scriptsize 19}]{V00}

    \tikzset{VertexStyle/.append style={orange,text=black}}

    \Vertex[x=5,y=1,L={\scriptsize 20}]{V00}
    \Vertex[x=6,y=2,L={\scriptsize 20}]{V00}
    \Vertex[x=7,y=3,L={\scriptsize 20}]{V00}

    \tikzset{VertexStyle/.append style={orange,text=black}}

    \Vertex[x=5,y=0,L={\scriptsize 21}]{V00}
    \Vertex[x=6,y=1,L={\scriptsize 21}]{V00}
    \Vertex[x=7,y=2,L={\scriptsize 21}]{V00}

    \tikzset{VertexStyle/.append style={orange,text=black}}

    \Vertex[x=5,y=-1,L={\scriptsize 22}]{V00}
    \Vertex[x=6,y=0,L={\scriptsize 22}]{V00}

    \tikzset{VertexStyle/.append style={orange,text=black}}

    \Vertex[x=5,y=-2,L={\scriptsize 23}]{V00}

    \tikzset{VertexStyle/.append style={orange,text=black}}

    \Vertex[x=4,y=-2,L={\scriptsize 24}]{V00}

    \tikzset{VertexStyle/.append style={black,text=white}}
    \tikzset{VertexStyle/.append style={star, minimum size = 12pt}}

    \Vertex[x=-1,y=0,L={\scriptsize 0}]{V00}
    \Vertex[x=0,y=-1,L={\scriptsize 0}]{V00}
    
    \tikzset{VertexStyle/.append style={rectangle, minimum size = 8pt}}
    
    \tikzset{VertexStyle/.append style={black,text=white, inner sep=1pt}}
    
    \Vertex[x=-1,y=1,L={\scriptsize 1}]{V00}
    \Vertex[x=0,y=2,L={\scriptsize 1}]{V00}
    
    \tikzset{VertexStyle/.append style={black,text=white}}

    \Vertex[x=2,y=1,L={\scriptsize 2}]{V00}
    \Vertex[x=1,y=2,L={\scriptsize 2}]{V00}
    
    \tikzset{VertexStyle/.append style={black,text=white}}

    \Vertex[x=4,y=0,L={\scriptsize 3}]{V00}
    \Vertex[x=3,y=1,L={\scriptsize 3}]{V00}
    
    \tikzset{VertexStyle/.append style={black,text=white}}

    \Vertex[x=4,y=-1,L={\scriptsize 4}]{V00}
    \Vertex[x=3,y=-2,L={\scriptsize 4}]{V00}
    
    \tikzset{VertexStyle/.append style={black,text=white}}

    \Vertex[x=3,y=-3,L={\scriptsize 5}]{V00}
    \Vertex[x=2,y=-4,L={\scriptsize 5}]{V00}
    
    \tikzset{VertexStyle/.append style={black,text=white}}

    \Vertex[x=2,y=-5,L={\scriptsize 6}]{V00}
    \Vertex[x=1,y=-6,L={\scriptsize 6}]{V00}
    
    \tikzset{VertexStyle/.append style={black,text=white}}

    \Vertex[x=-1,y=-5,L={\scriptsize 7}]{V00}
    \Vertex[x=0,y=-6,L={\scriptsize 7}]{V00}
    
    \tikzset{VertexStyle/.append style={black,text=white}}

    \Vertex[x=-3,y=-4,L={\scriptsize 8}]{V00}
    \Vertex[x=-2,y=-5,L={\scriptsize 8}]{V00}
    
    \tikzset{VertexStyle/.append style={black,text=white}}

    \Vertex[x=-5,y=-3,L={\scriptsize 9}]{V00}
    \Vertex[x=-4,y=-4,L={\scriptsize 9}]{V00}
    
    \tikzset{VertexStyle/.append style={black,text=white, inner sep=0.1pt}}

    \Vertex[x=-7,y=-2,L={\scriptsize 10}]{V00}
    \Vertex[x=-6,y=-3,L={\scriptsize 10}]{V00}
    
    \tikzset{VertexStyle/.append style={black,text=white}}

    \Vertex[x=-7,y=-1,L={\scriptsize 11}]{V00}
    \Vertex[x=-6,y=0,L={\scriptsize 11}]{V00}
    
    \tikzset{VertexStyle/.append style={black,text=white}}

    \Vertex[x=-6,y=1,L={\scriptsize 12}]{V00}
    \Vertex[x=-5,y=2,L={\scriptsize 12}]{V00}
    
    \tikzset{VertexStyle/.append style={black,text=white}}

    \Vertex[x=-5,y=3,L={\scriptsize 13}]{V00}
    \Vertex[x=-4,y=4,L={\scriptsize 13}]{V00}
    
    \tikzset{VertexStyle/.append style={black,text=white}}

    \Vertex[x=-4,y=5,L={\scriptsize 14}]{V00}
    \Vertex[x=-3,y=6,L={\scriptsize 14}]{V00}
    
    \tikzset{VertexStyle/.append style={black,text=white}}

    \Vertex[x=-3,y=7,L={\scriptsize 15}]{V00}
    \Vertex[x=-2,y=8,L={\scriptsize 15}]{V00}
    
    \tikzset{VertexStyle/.append style={black,text=white}}

    \Vertex[x=0,y=7,L={\scriptsize 16}]{V00}
    \Vertex[x=-1,y=8,L={\scriptsize 16}]{V00}
    
    \tikzset{VertexStyle/.append style={black,text=white}}

    \Vertex[x=2,y=6,L={\scriptsize 17}]{V00}
    \Vertex[x=1,y=7,L={\scriptsize 17}]{V00}
    
    \tikzset{VertexStyle/.append style={black,text=white}}

    \Vertex[x=4,y=5,L={\scriptsize 18}]{V00}
    \Vertex[x=3,y=6,L={\scriptsize 18}]{V00}
    
    \tikzset{VertexStyle/.append style={black,text=white, inner sep=0.1pt}}

    \Vertex[x=6,y=4,L={\scriptsize 19}]{V00}
    \Vertex[x=5,y=5,L={\scriptsize 19}]{V00}
    
    \tikzset{VertexStyle/.append style={black,text=white}}

    \Vertex[x=8,y=3,L={\scriptsize 20}]{V00}
    \Vertex[x=7,y=4,L={\scriptsize 20}]{V00}
    
    \tikzset{VertexStyle/.append style={black,text=white}}

    \Vertex[x=8,y=2,L={\scriptsize 21}]{V00}
    \Vertex[x=7,y=1,L={\scriptsize 21}]{V00}
    
    \tikzset{VertexStyle/.append style={black,text=white}}

    \Vertex[x=7,y=0,L={\scriptsize 22}]{V00}
    \Vertex[x=6,y=-1,L={\scriptsize 22}]{V00}
    
    \tikzset{VertexStyle/.append style={black,text=white}}

    \Vertex[x=6,y=-2,L={\scriptsize 23}]{V00}
    \Vertex[x=5,y=-3,L={\scriptsize 23}]{V00}
    
    \tikzset{VertexStyle/.append style={black,text=white}}

    \Vertex[x=4,y=-3,L={\scriptsize 24}]{V00}

\end{tikzpicture}
\caption{A strategy for containment on the infinite square grid with two firefighters when $d=3$. Recall that burnt vertices are round and orange whereas defended vertices are square and black, except in round zero where they are diamond and star shaped respectively.}
    \label{fig:square_d3_2f}
\end{figure}

\subsection{Square Grid when $\mathbf{d=2}$} \label{subsec:square2}

As previously mentioned, the case of $d=2$ is much more challenging than the case of $d\neq2$, so we need to introduce more complex strategies. In Theorem~\ref{thm:column} we refer to \textit{corralling} the fire to a column, which simply means that the two firefighters are moving in the same direction in two parallel lines and if they continued like this then only vertices between those two lines would burn (see Figure~\ref{fig:column_corral}). We also require that this column of fire is only burning in one direction, rather than in two directions. More formally we say there exist $x_0,y_0,x_1,y_1 \in \mathbb{Z}$ such that $x_0 < x_1$, $y_0 < y_1$, the firefighters are defending $(x_0,y_0 + t)$ and $(x_1,y_1 + t)$ $t$ turns after the corralling has begun\footnote{We assume the column is burning upwards but of course we can always reorient the grid so whichever direction the column is burning is upwards.}, and no new vertices of the form $(x,y)$ with $x \not\in [x_0,x_1]$ and $y > y_1$ or of the form $(x,y)$ with $y < y_0$ will burn if the firefighters continue with this strategy.

\begin{figure}[H]
    \centering
\begin{tikzpicture}[scale=0.5]
    \GraphInit[vstyle=Classic]
    \SetUpVertex[FillColor=white]

    \tikzset{VertexStyle/.append style={minimum size=8pt, inner sep=1pt}}

    \foreach \y in {-6,-5,...,1} {\foreach \x in {-7,-6,...,-3} {\Vertex[x=\x,y=\y,NoLabel=true,]{V\x\y}}}
    \foreach[count =\i, evaluate=\i as \z using int(\i-6)] \y in {-6,-5,...,0} {\foreach \x in {-7,-6,...,-3} {\Edge(V\x\y)(V\x\z)}}
    \foreach[count =\i, evaluate=\i as \z using int(\i-7)] \x in {-7,-6,...,-4} {\foreach \y in {-6,-5,...,1} {\Edge(V\x\y)(V\z\y)}}
    
    \tikzset{VertexStyle/.append style={orange}}
    
    \Vertex[x=-5,y=-5,NoLabel=true]{V00}
    \Vertex[x=-4,y=-5,NoLabel=true]{V00}
    \Vertex[x=-6,y=-5,NoLabel=true]{V00}
    
    \Vertex[x=-5,y=-4,NoLabel=true]{V00}
    \Vertex[x=-4,y=-4,NoLabel=true]{V00}
    \Vertex[x=-6,y=-4,NoLabel=true]{V00}
    
    \Vertex[x=-5,y=-3,NoLabel=true]{V00}
    \Vertex[x=-4,y=-3,NoLabel=true]{V00}
    \Vertex[x=-6,y=-3,NoLabel=true]{V00}
    
    \Vertex[x=-5,y=-2,NoLabel=true]{V00}
    \Vertex[x=-4,y=-2,NoLabel=true]{V00}
    \Vertex[x=-6,y=-2,NoLabel=true]{V00}
    
    \Vertex[x=-5,y=-1,NoLabel=true]{V00}
    \Vertex[x=-4,y=-1,NoLabel=true]{V00}
    \Vertex[x=-6,y=-1,NoLabel=true]{V00}
    
    \Vertex[x=-5,y=0,NoLabel=true]{V00}
    \Vertex[x=-4,y=0,NoLabel=true]{V00}
    \Vertex[x=-6,y=0,NoLabel=true]{V00}
    
    \tikzset{VertexStyle/.append style={rectangle, black}}
    
    \Vertex[x=-5,y=-6,NoLabel=true]{V00}
    \Vertex[x=-4,y=-6,NoLabel=true]{V00}
    \Vertex[x=-6,y=-6,NoLabel=true]{V00}
    
    \Vertex[x=-3,y=-5,NoLabel=true]{V00}
    \Vertex[x=-7,y=-5,NoLabel=true]{V00}
    
    \Vertex[x=-3,y=-4,NoLabel=true]{V00}
    \Vertex[x=-7,y=-4,NoLabel=true]{V00}
    
    \Vertex[x=-3,y=-3,NoLabel=true]{V00}
    \Vertex[x=-7,y=-3,NoLabel=true]{V00}
    
    \Vertex[x=-3,y=-2,NoLabel=true]{V00}
    \Vertex[x=-7,y=-2,NoLabel=true]{V00}
    
    \Vertex[x=-3,y=-1,NoLabel=true]{V00}
    \Vertex[x=-7,y=-1,NoLabel=true]{V00}
    
\end{tikzpicture}
\caption{Two firefighters corralling the fire to a column. Here the fire has just spread and the firefighters (top left and top right) will now each move up one to prevent the fire from spreading out to the side.}
    \label{fig:column_corral}
\end{figure}

\begin{theorem} \label{thm:column}
    Let $d=2$. If two firefighters can corral the fire to a column of finite width in the infinite square grid, then they can contain the fire.
\end{theorem}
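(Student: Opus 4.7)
I would set up coordinates so that the corralling column is vertical and the fire advances upward. Choose a time $t_0$ at which the firefighters $F_1,F_2$ have just protected $(x_L,y_0)$ and $(x_R,y_0)$ while the fire's current leading edge is row $y_0$ in columns $[x_L+1,x_R-1]$, and write $w=x_R-x_L-1$ for the finite width of the burning column. (Such a configuration occurs during any ongoing corralling in one direction, possibly after a brief setup turn in which each firefighter moves one square to align with the leading edge.)

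The closing strategy is that on every subsequent turn $t_0+k$ (with $k\ge 1$) the two firefighters step simultaneously one unit up and one unit inward: $F_1$ goes from $(x_L+k-1,y_0+k-1)$ to $(x_L+k,y_0+k)$ and $F_2$ from $(x_R-k+1,y_0+k-1)$ to $(x_R-k,y_0+k)$. Each move changes both coordinates by one, so has lattice distance exactly $2$, and the length-two path that first climbs one row and then steps inward one column passes through a vertex one row above the current leading edge of the fire and is therefore unburnt. Hence the move is legal under $d=2$ even when distance is measured in the subgraph induced by unburnt vertices.

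I would then prove by induction on $k$ the invariant that immediately after turn $t_0+k$ the firefighters occupy $(x_L+k,y_0+k)$ and $(x_R-k,y_0+k)$ and the fire's top row is row $y_0+k$ in columns $[x_L+k+1,x_R-k-1]$, so that the width has dropped to $w-2k$. The inductive step is a routine spread check: the two vertices newly protected on turn $t_0+k$ block the fire from occupying the outermost columns of the new top row; every vertex in a side column at or below row $y_0+k$ was already protected on an earlier turn (or is a fire vertex whose only unprotected forward neighbour is the interior of the new top row); hence the only newly burning vertices on that turn are exactly the middle columns of row $y_0+k$. Iterating, if $w=2k$ is even then after turn $t_0+k$ the firefighters sit on adjacent vertices and the interval of new fire is empty, yielding containment; if $w=2k+1$ is odd then after $k$ steps a single column of fire remains at the top, and on turn $t_0+k+1$ one firefighter makes the diagonal hop onto the vertex directly above the lone column while the other takes any legal step, after which all of the surviving fire vertex's forward and lateral neighbours are protected and the fire is contained.

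The step that most needs care is the induced-subgraph distance bookkeeping for the diagonal moves, and this is precisely where the one-direction hypothesis is essential: because only a single vertical strip has ever burned, the vertices just above the current leading edge are guaranteed to be unburnt when the firefighters land on them, which is exactly what keeps each diagonal step within the $d=2$ budget in the induced subgraph. The odd-width endgame is the only mildly special case, but it slots into the same framework because the final diagonal hop is again a distance-two move through an unburnt intermediate vertex.
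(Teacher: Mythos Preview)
Your diagonal-closing argument has a genuine gap: under the paper's (and the standard) turn order the fire spreads \emph{before} the firefighters move, so on turn $t_0+1$ the vertex $(x_L+1,y_0+1)$ is already burning when $F_1$ tries to land on it. Concretely, at the end of turn $t_0$ the vertex $(x_L+1,y_0)$ is on fire, hence its neighbour $(x_L+1,y_0+1)$ ignites at the start of turn $t_0+1$; the firefighter cannot then protect it. The induced-path bookkeeping you do is for the \emph{intermediate} vertex $(x_L,y_0+1)$, which is indeed unburnt, but the destination is not. This is not a repairable timing slip: during corralling each firefighter spends one unit of movement per turn just to keep level with the fire front, and the remaining one unit, if used to step inward, always targets a vertex in the row the fire has just reached. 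Skipping ahead by two in the vertical direction instead would leave $(x_L,y_0+1)$ unprotected and let the fire escape sideways on the following turn. So the ``pinch the column shut'' idea cannot work at $d=2$; this obstruction is precisely why the theorem is nontrivial.

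The paper's proof takes a completely different route. It first proves, as a lemma, that two $d=2$ firefighters can contain any finite fire in a \emph{half} grid. Then, from the corralled column, both firefighters continue the side walls for an arbitrarily long time; one firefighter then exploits the spare speed by moving two steps per turn along already-protected territory to go up, over the top of the column, and down to join the other firefighter. Once both are on the same side, the long wall they have built plays the role of the half-grid boundary, and the half-grid lemma finishes the containment. The key idea you are missing is that the extra movement budget must be spent travelling along protected vertices (where skipping is harmless) rather than trying to protect new vertices faster than one per turn at the fire front.
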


\begin{proof}
    The proof of this theorem relies on the fact that two firefighters can contain a fire in the infinite half square grid (the subgraph induced by the vertex set $\{(x,y) \mid y \geq 0\}$) when $d=2$, even if the fire initially breaks out at any finite set of vertices $S$ in the half grid. We in fact prove something stronger, namely that if we instead start the fire at $(0,0)$ and allow it to burn until all of the vertices in $S$ are on fire then the firefighters can still contain the fire. So suppose now that the fire started at $(0,0)$ and after $c$ turns all the vertices of $S$ are on fire. We assume $c$ is even since if it takes an odd number of turns for all of $S$ to be on fire then we can allow the fire to burn for one more turn and proceed as if it took an even number of turns. The firefighters now execute a very simple strategy. First they defend the vertices $(-(c+1),0)$ and $(-(c+1),1)$, and on subsequent turns they each move two vertices upwards to build a wall along the line $x=-(c+1)$. They continue until they defend the vertices $(-(c+1),4c)$ and $(-(c+1),4c+1)$. At this point the largest $y$ value among the burning vertices is $3c$, at the vertex $(0,3c)$. The firefighters now defend the vertices $(-c,4c+1)$ and $(-(c-1),4c+1)$ and move two to the right on each subsequent turn. When the firefighters reach the vertices $(0,4c+1)$ and $(1,4c+1)$ the maximum $y$ value of the fire is $3c + \frac{c}{2}$, which is less than $4c+1$, so the firefighters can continue moving to the right. At this point the maximum $x$ value among the burning vertices is $3c + \frac{c}{2}$, at the vertex $(3c + \frac{c}{2},0)$. When the firefighters reach the vertices $(13c,4c+1)$ and $(13c+1,4c+1)$ the maximum $x$ value of the fire is $10c$. It is clear that at this point if the firefighters turn downwards and create a barrier along the line $x=13c+1$ that they will reach the vertex $(13c+1,0)$ before the fire and thus contain the fire. The initial part of this strategy with $c=2$ is depicted in Figure~\ref{fig:diamond}.

    We now prove the result. Suppose the fire is corralled to a column by two firefighters on the infinite square grid and that $d=2$. The firefighters will continue moving upwards and corralling the fire for $i$ turns. At this point, the firefighter with the larger $x$ coordinate, call this $x_1$, continues as they were while the one with the smaller $x$ value, call this $x_0$, starts moving up two vertices at a time instead of one vertex at a time. Once the second firefighter's $y$ coordinate is at least $x_1-x_0$ larger than the largest $y$ coordinate of any burning vertex, they begin instead moving two vertices to the right until their $x$ coordinate is equal to $x_1$, and they then wait for the first firefighter to reach their position. These two firefighters then continue upwards defending vertices of the form $(x_1,y_1+t)$ and $(x_1,y_1+t+1)$ for some constant $y_1$ until their $y$ values, call them $\bar{y} - 1$ and $\bar{y}$, are at least $x_1-x_0$ larger than the largest $y$ value of any burning vertex. At this point they can turn left and begin defending vertices of the form $(x_1 - t, \bar{y})$ and $(x_0 - t + 1, \bar{y})$. Once their $x$ coordinates are at most $x_0$ we know that the max $y$ value of any burning vertex is bounded above by $\bar{y}$. Now if the firefighters consider the line $x=x_0$ as the boundary of a half grid then we have a finite source fire where our firefighters are already in the starting position we described above. As long as $i$ was large enough the firefighters will be able to surround the fire in this subgraph before the fire can get past the bottom of this wall, and the fire will never be able to get past the wall the firefighters build along the line $y=\bar{y}$.
\end{proof}

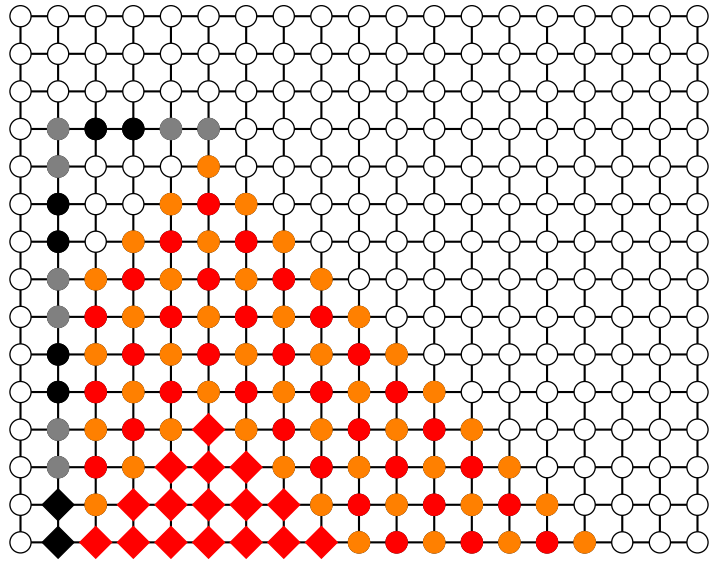
\begin{figure}[H]
    \centering
\begin{tikzpicture}[scale=0.5]
    \SetUpVertex[FillColor=white]

    \tikzset{VertexStyle/.append style={minimum size=8pt, inner sep=1pt}}

    \foreach \y in {-10,-9,...,4} {\foreach \x in {-10,-9,...,8} {\Vertex[x=\x,y=\y,NoLabel=true,]{V\x\y}}}
    \foreach[count =\i, evaluate=\i as \z using int(\i-10)] \y in {-10,-9,...,3} {\foreach \x in {-10,-9,...,8} {\Edge(V\x\y)(V\x\z)}}
    \foreach[count =\i, evaluate=\i as \z using int(\i-10)] \x in {-10,-9,...,7} {\foreach \y in {-10,-9,...,4} {\Edge(V\x\y)(V\z\y)}}
    
    \tikzset{VertexStyle/.append style={orange, diamond, minimum size = 12pt}}
    \tikzset{VertexStyle/.append style={text=black, inner sep=1pt}}
    
    \Vertex[x=-7,y=-10,L={\scriptsize 0}]{V00}
    \Vertex[x=-6,y=-10,L={\scriptsize 0}]{V00}
    \Vertex[x=-6,y=-9,L={\scriptsize 0}]{V00}
    \Vertex[x=-5,y=-10,L={\scriptsize 0}]{V00}
    \Vertex[x=-5,y=-9,L={\scriptsize 0}]{V00}
    \Vertex[x=-5,y=-8,L={\scriptsize 0}]{V00}
    \Vertex[x=-4,y=-10,L={\scriptsize 0}]{V00}
    \Vertex[x=-4,y=-9,L={\scriptsize 0}]{V00}
    \Vertex[x=-3,y=-10,L={\scriptsize 0}]{V00}
    
    \tikzset{VertexStyle/.append style={orange, circle, minimum size = 8pt, text=black, inner sep=1pt}}

    \Vertex[x=-2,y=-10,L={\scriptsize 1}]{V00}
    \Vertex[x=-3,y=-9,L={\scriptsize 1}]{V00}
    \Vertex[x=-4,y=-8,L={\scriptsize 1}]{V00}
    \Vertex[x=-5,y=-7,L={\scriptsize 1}]{V00}
    \Vertex[x=-6,y=-8,L={\scriptsize 1}]{V00}
    \Vertex[x=-7,y=-9,L={\scriptsize 1}]{V00}
    
    \Vertex[x=-7,y=-8,L={\scriptsize 2}]{V00}
    \Vertex[x=-6,y=-7,L={\scriptsize 2}]{V00}
    \Vertex[x=-5,y=-6,L={\scriptsize 2}]{V00}
    \Vertex[x=-4,y=-7,L={\scriptsize 2}]{V00}
    \Vertex[x=-3,y=-8,L={\scriptsize 2}]{V00}
    \Vertex[x=-2,y=-9,L={\scriptsize 2}]{V00}
    \Vertex[x=-1,y=-10,L={\scriptsize 2}]{V00}

    \Vertex[x=-7,y=-7,L={\scriptsize 3}]{V00}
    \Vertex[x=-6,y=-6,L={\scriptsize 3}]{V00}
    \Vertex[x=-5,y=-5,L={\scriptsize 3}]{V00}
    \Vertex[x=-4,y=-6,L={\scriptsize 3}]{V00}
    \Vertex[x=-3,y=-7,L={\scriptsize 3}]{V00}
    \Vertex[x=-2,y=-8,L={\scriptsize 3}]{V00}
    \Vertex[x=-1,y=-9,L={\scriptsize 3}]{V00}
    \Vertex[x=0,y=-10,L={\scriptsize 3}]{V00}

    \Vertex[x=-7,y=-6,L={\scriptsize 4}]{V00}
    \Vertex[x=-6,y=-5,L={\scriptsize 4}]{V00}
    \Vertex[x=-5,y=-4,L={\scriptsize 4}]{V00}
    \Vertex[x=-4,y=-5,L={\scriptsize 4}]{V00}
    \Vertex[x=-3,y=-6,L={\scriptsize 4}]{V00}
    \Vertex[x=-2,y=-7,L={\scriptsize 4}]{V00}
    \Vertex[x=-1,y=-8,L={\scriptsize 4}]{V00}
    \Vertex[x=0,y=-9,L={\scriptsize 4}]{V00}
    \Vertex[x=1,y=-10,L={\scriptsize 4}]{V00}

    \Vertex[x=-7,y=-5,L={\scriptsize 5}]{V00}
    \Vertex[x=-6,y=-4,L={\scriptsize 5}]{V00}
    \Vertex[x=-5,y=-3,L={\scriptsize 5}]{V00}
    \Vertex[x=-4,y=-4,L={\scriptsize 5}]{V00}
    \Vertex[x=-3,y=-5,L={\scriptsize 5}]{V00}
    \Vertex[x=-2,y=-6,L={\scriptsize 5}]{V00}
    \Vertex[x=-1,y=-7,L={\scriptsize 5}]{V00}
    \Vertex[x=0,y=-8,L={\scriptsize 5}]{V00}
    \Vertex[x=1,y=-9,L={\scriptsize 5}]{V00}
    \Vertex[x=2,y=-10,L={\scriptsize 5}]{V00}
    
    \Vertex[x=-7,y=-4,L={\scriptsize 6}]{V00}
    \Vertex[x=-6,y=-3,L={\scriptsize 6}]{V00}
    \Vertex[x=-5,y=-2,L={\scriptsize 6}]{V00}
    \Vertex[x=-4,y=-3,L={\scriptsize 6}]{V00}
    \Vertex[x=-3,y=-4,L={\scriptsize 6}]{V00}
    \Vertex[x=-2,y=-5,L={\scriptsize 6}]{V00}
    \Vertex[x=-1,y=-6,L={\scriptsize 6}]{V00}
    \Vertex[x=0,y=-7,L={\scriptsize 6}]{V00}
    \Vertex[x=1,y=-8,L={\scriptsize 6}]{V00}
    \Vertex[x=2,y=-9,L={\scriptsize 6}]{V00}
    \Vertex[x=3,y=-10,L={\scriptsize 6}]{V00}

    \Vertex[x=-7,y=-3,L={\scriptsize 7}]{V00}
    \Vertex[x=-6,y=-2,L={\scriptsize 7}]{V00}
    \Vertex[x=-5,y=-1,L={\scriptsize 7}]{V00}
    \Vertex[x=-4,y=-2,L={\scriptsize 7}]{V00}
    \Vertex[x=-3,y=-3,L={\scriptsize 7}]{V00}
    \Vertex[x=-2,y=-4,L={\scriptsize 7}]{V00}
    \Vertex[x=-1,y=-5,L={\scriptsize 7}]{V00}
    \Vertex[x=0,y=-6,L={\scriptsize 7}]{V00}
    \Vertex[x=1,y=-7,L={\scriptsize 7}]{V00}
    \Vertex[x=2,y=-8,L={\scriptsize 7}]{V00}
    \Vertex[x=3,y=-9,L={\scriptsize 7}]{V00}
    \Vertex[x=4,y=-10,L={\scriptsize 7}]{V00}

    \tikzset{VertexStyle/.append style={black, star, minimum size = 12pt, text=white, inner sep=1pt}}
    \Vertex[x=-8,y=-10,L={\scriptsize 0}]{V00}
    \Vertex[x=-8,y=-9,L={\scriptsize 0}]{V00}
    \tikzset{VertexStyle/.append style={rectangle, minimum size = 8pt, text=white, inner sep=0.5pt}}
    
    \Vertex[x=-8,y=-8,L={\scriptsize 1}]{V00}
    \Vertex[x=-8,y=-7,L={\scriptsize 1}]{V00}

    \Vertex[x=-8,y=-6,L={\scriptsize 2}]{V00}
    \Vertex[x=-8,y=-5,L={\scriptsize 2}]{V00}

    \Vertex[x=-8,y=-4,L={\scriptsize 3}]{V00}
    \Vertex[x=-8,y=-3,L={\scriptsize 3}]{V00}

    \Vertex[x=-8,y=-2,L={\scriptsize 4}]{V00}
    \Vertex[x=-8,y=-1,L={\scriptsize 4}]{V00}

    \Vertex[x=-8,y=0,L={\scriptsize 5}]{V00}
    \Vertex[x=-8,y=1,L={\scriptsize 5}]{V00}

    \Vertex[x=-7,y=1,L={\scriptsize 6}]{V00}
    \Vertex[x=-6,y=1,L={\scriptsize 6}]{V00}

    \Vertex[x=-5,y=1,L={\scriptsize 7}]{V00}
    \Vertex[x=-4,y=1,L={\scriptsize 7}]{V00}

    \tikzset{EdgeStyle/.append style={line width=5pt}}   
    \tikzset{VertexStyle/.append style={minimum size=1pt, inner sep=0pt}}
    \tikzset{VertexStyle/.append style={white}}

    \Vertex[x=-12,y=-11,NoLabel=true]{A} 
    \Vertex[x=10,y=-11,NoLabel=true]{B}

    \Edge(A)(B)

\end{tikzpicture}
\caption{The beginning of the strategy described in the proof of Theorem~\ref{thm:column} with $c=2$. Note that this is a half grid for which the border is along the bottom of the diagram.}
    \label{fig:diamond}
\end{figure}

    We conjecture that the converse of Theorem~\ref{thm:column} also holds.

\begin{restatable}[]{conjecture}{conja}
\label{conj:column}
    Let $d=2$. Then two firefighters can contain the fire in the infinite square grid if and only if they can contain the fire to a column.
\end{restatable}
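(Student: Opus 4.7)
The plan is to prove the converse direction of Conjecture~\ref{conj:column}, since the forward direction (corralling implies containment) is Theorem~\ref{thm:column}. Suppose a strategy $\mathcal{S}$ contains the fire using two firefighters. For each cardinal direction let $t_E, t_W, t_N, t_S$ denote the latest time at which the fire spreads in that direction; since $\mathcal{S}$ contains the fire, all four values are finite. Let $t^*$ be the third-smallest among these four values. The key claim is that immediately after $t^*$, the configuration in $\mathcal{S}$ is a column corralling in the sense of Figure~\ref{fig:column_corral}, so the existence of a column-corralling strategy follows directly from the existence of $\mathcal{S}$.

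Relabel coordinates so that the last direction to remain active after $t^*$ is south. Then for $t > t^*$, the burning region $B_t$ has its northern, eastern, and western extents fixed while its southern boundary advances by one per turn. The eastern and western faces of $B_t$ are bounded by preexisting walls of protected vertices, so the fire occupies a column of finite width, bounded east and west by these walls, with its southern extent growing by one each turn. For the southward spread to eventually be blocked, both walls must be extended southward continuously at a rate of one vertex per turn on each side. This demands one firefighter per wall, accounting for both available firefighters, so from time $t^*$ onward the two firefighters are moving in the same direction in two parallel lines with a finite-width burning column between them: exactly the corralling configuration.

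The main obstacle will be the rate argument ruling out alternative ways the firefighters could cover the two walls --- for instance, a firefighter temporarily leaving its wall to assist the other and relying on earlier protections to hold the line in the meantime. Since the fire advances south by one vertex per turn along each of the two walls, the wall-extending firefighter must also advance by at least one per turn, and the $d=2$ budget leaves no slack for excursions that do not place a protection on the wall. Formalizing this in full generality --- allowing for unusual wall shapes that are not perfectly straight, and for nonstandard orientations arising when the walls were built on a diagonal --- is likely the technical heart of the argument. A minor additional subtlety is the edge case where several of the stopping times $t_E, t_W, t_N, t_S$ coincide; this can be handled by adjusting the definition of $t^*$ but does not introduce new ideas.
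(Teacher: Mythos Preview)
This statement is presented in the paper as an open \emph{conjecture}; the authors do not prove it, and indeed they also conjecture (Conjecture~\ref{conj:two_insuff}) that two firefighters cannot contain the fire at all when $d=2$, which would make both sides of the biconditional false and the equivalence vacuously true. So there is no proof in the paper to compare your proposal against, and what you have written is an attempted proof of an open problem.

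The central gap in your argument is the leap from ``three of the four directional extents have stopped growing by time $t^*$'' to ``the configuration at $t^*$ is a column corralling in the sense of Figure~\ref{fig:column_corral}.'' Freezing the eastern, western, and northern extrema of the burnt set tells you nothing about the \emph{shape} of the southern front or about \emph{where the two firefighters are}. The front advancing south need not be a full-width row bounded by two straight walls; it could be a jagged or tapering region whose width changes as it descends, so your rate argument (``each wall needs one firefighter advancing at rate one'') does not apply. Equally, at time $t^*$ one firefighter may have just placed the final northern block while the other finished the eastern side; neither need be in position to walk a straight line southward along a wall. You acknowledge this difficulty yourself when you say that handling ``unusual wall shapes'' is ``likely the technical heart of the argument,'' but the proposal offers no mechanism to control either the front geometry or the firefighter positions, and without that there is no argument.

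A secondary issue: even if you could show that \emph{some} phase of a containing strategy $\mathcal{S}$ has the fire confined to a strip with an open end, you would still need the two firefighters to be simultaneously on the two bounding lines, moving in the same direction---that is the actual content of the corralling definition. Your sketch never establishes this positional requirement.
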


Determining $f_2(G_{\square})$ has proven to be a very stubborn problem. We can easily establish that $f_2(G_{\square}) \geq 2$ since two firefighters are optimal in the original game from~\cite{wang2002fire}. With three firefighters the fire is contained in seven turns as illustrated in Figure~\ref{fig:square_d2_3f}, and thus $2 \leq f_2(G_{\square}) \leq 3$. 

\begin{figure}[H]
    \centering
\begin{tikzpicture}[scale=0.5]
    \SetUpVertex[FillColor=white]

    \tikzset{VertexStyle/.append style={minimum size=8pt, inner sep=1pt}}

    \foreach \y in {-2,-1,...,8} {\foreach \x in {-2,-1,...,2} {\Vertex[x=\x,y=\y,NoLabel=true,]{V\x\y}}}
    \foreach[count =\i, evaluate=\i as \z using int(\i-2)] \y in {-2,-1,...,7} {\foreach \x in {-2,-1,...,2} {\Edge(V\x\y)(V\x\z)}}
    \foreach[count =\i, evaluate=\i as \z using int(\i-2)] \x in {-2,-1,...,1} {\foreach \y in {-2,-1,...,8} {\Edge(V\x\y)(V\z\y)}}
    
    \tikzset{VertexStyle/.append style={orange}}
    
    \tikzset{VertexStyle/.append style={diamond, minimum size = 12pt, text=black, inner sep=0.5pt}}
    \Vertex[x=0,y=0,L={\scriptsize 0}]{V00}
    \tikzset{VertexStyle/.append style={circle, minimum size = 8pt, inner sep=1pt}}
    
    \Vertex[x=0,y=1,L={\scriptsize 1}]{V00}
    \Vertex[x=0,y=2,L={\scriptsize 2}]{V00}
    \Vertex[x=0,y=3,L={\scriptsize 3}]{V00}
    \Vertex[x=0,y=4,L={\scriptsize 4}]{V00}
    \Vertex[x=0,y=5,L={\scriptsize 5}]{V00}
    \Vertex[x=0,y=6,L={\scriptsize 6}]{V00}
    
    \tikzset{VertexStyle/.append style={black}}
    \tikzset{VertexStyle/.append style={star, minimum size = 12pt, text=white, inner sep=0.5pt}}
    \Vertex[x=-1,y=0,L={\scriptsize 0}]{V00}
    \Vertex[x=1,y=0,L={\scriptsize 0}]{V00}
    \tikzset{VertexStyle/.append style={rectangle, minimum size = 8pt, inner sep=1pt}}
    
    \Vertex[x=-1,y=1,L={\scriptsize 1}]{V00}
    \Vertex[x=1,y=1,L={\scriptsize 1}]{V00}
    
    \Vertex[x=-1,y=2,L={\scriptsize 2}]{V00}
    \Vertex[x=1,y=2,L={\scriptsize 2}]{V00}
    
    \Vertex[x=-1,y=3,L={\scriptsize 3}]{V00}
    \Vertex[x=1,y=3,L={\scriptsize 3}]{V00}

    \Vertex[x=-1,y=4,L={\scriptsize 4}]{V00}
    \Vertex[x=1,y=4,L={\scriptsize 4}]{V00}

    \Vertex[x=-1,y=5,L={\scriptsize 5}]{V00}
    \Vertex[x=1,y=5,L={\scriptsize 5}]{V00}

    \Vertex[x=-1,y=6,L={\scriptsize 6}]{V00}
    \Vertex[x=1,y=6,L={\scriptsize 6}]{V00}
    
    \tikzset{VertexStyle/.append style={color=brown!55!black}}
    
    \tikzset{VertexStyle/.append style={star, minimum size = 12pt, text=white, inner sep=0.5pt}}
    \Vertex[x=0,y=-1,L={\scriptsize 0}]{V00}
    \tikzset{VertexStyle/.append style={rectangle, minimum size = 8pt, inner sep=1pt}}
    \Vertex[x=-2,y=-1,L={\scriptsize 1}]{V00}
    \Vertex[x=-2,y=1,L={\scriptsize 2}]{V00}
    \Vertex[x=-2,y=3,L={\scriptsize 3}]{V00}
    \Vertex[x=-2,y=5,L={\scriptsize 4}]{V00}
    \Vertex[x=-2,y=7,L={\scriptsize 5}]{V00}
    \Vertex[x=0,y=7,L={\scriptsize 6}]{V00}

\end{tikzpicture}
\caption{A strategy for containment of a single fire on the infinite square grid with three firefighters when $d=2$. The third firefighter is depicted here in brown to highlight the fact that it plays a different role from the other two firefighters.}
    \label{fig:square_d2_3f}
\end{figure}

However, a successful strategy with two firefighters remains elusive. It appears that two firefighters can only restrict the fire to a quarter plane in the case of $d=2$. An approach which has been used to deal with cases where it is unknown whether or not a certain number of firefighters suffices is to give the firefighters some small advantage and show that when using that advantage they can contain the fire. In~\cite{english2021firefighting,gavenciak2014firefighting} the hexagonal grid is considered where there is one firefighter available except on one or two extra turns where there is an extra firefighter. We consider this type of advantage as well as some others over the course of the next two lemmas and the discussion preceding Figure~\ref{fig:sum_dist}.

\begin{lemma} \label{lemma:miss_col}
    Let $d=2$. A fire in the infinite square grid, minus the path $P$ induced by the vertices of the form $(0,t)$ for all $t \in \mathbb{Z}^{-}$, can be contained by two firefighters.
\end{lemma}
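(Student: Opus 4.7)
The plan is to reduce the containment problem to a setting where Theorem~\ref{thm:column} applies, by using the missing column as a free one-sided barrier and corralling the fire into a column of width one that burns in only one direction.

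I would first describe the strategy for the case where the fire starts at $(0,0)$, the vertex at the top of the missing column. On turn~$0$, place the firefighters at $(-1,0)$ and $(1,0)$. The neighbours of $(0,0)$ are $(0,1)$, $(-1,0)$, $(1,0)$, and $(0,-1)$; the last is missing and the middle two are protected, so the only vertex to which the fire can spread on turn~$1$ is $(0,1)$. On each subsequent turn $t \geq 1$, the firefighter at $(-1,t-1)$ moves to $(-1,t)$ and the firefighter at $(1,t-1)$ moves to $(1,t)$; each step has length one, well within the $d=2$ budget. These moves maintain the invariant that the fire is confined to the column $\{(0,t) : t \geq 0\}$ and burns strictly upward, which is exactly the hypothesis of Theorem~\ref{thm:column}. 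Applying that theorem finishes this case.

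For a fire originating at a general vertex $(x_0,y_0)$, the principle is the same: the firefighters use their speed advantage (each moves up to distance $2$ per turn, while the fire spreads only distance $1$ per turn) to march toward the region near the top of the missing column and establish the analogous corral, with one firefighter on each horizontal side of the fire column. Once the corralling configuration is set up---with the missing column serving as one side of the eventual column-barrier---Theorem~\ref{thm:column} again completes the argument.

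The main obstacle, in my view, is verifying the initial manoeuvre for fires that start far from the missing column or on the \emph{wrong} side of $(0,0)$: one must check that the firefighters can reach their corralling positions and block any stray spread before the fire expands beyond a recoverable state. I would expect the formal proof to exhibit the $(0,0)$ strategy in detail, probably with an accompanying figure, and either sketch or leave implicit the extension to other starting vertices via the speed-advantage argument together with another appeal to Theorem~\ref{thm:column}.
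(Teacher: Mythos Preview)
Your treatment of the special case where the fire starts at $(0,0)$ is correct and in fact cleaner than what the paper would do for that vertex: corralling immediately to the width-one column $\{(0,t):t\ge 0\}$ and invoking Theorem~\ref{thm:column} works exactly as you describe.

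The genuine gap is the general case. Your ``speed advantage'' sketch does not work as stated: while the firefighters can each move distance~$2$ per turn, if they simply march toward the missing column without laying down walls, the fire expands freely in all four directions and will not be recoverable. For a fire starting at, say, $(100,100)$, there is no way to ``establish the analogous corral'' near the missing column by racing there. You correctly flag this as the main obstacle, but you do not supply the missing idea.

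The paper's approach is substantively different. For a fire not adjacent to the missing column, the firefighters first corral the fire to a \emph{quarter plane} (this is always achievable with two $d=2$ firefighters on the square grid), choosing the orientation so that one firefighter $\mathcal{F}_1$ is building a wall in the negative $y$-direction toward the missing column while the other $\mathcal{F}_2$ builds away from it. When $\mathcal{F}_1$'s wall meets the missing column, that side is permanently sealed; $\mathcal{F}_1$ is then free to retrace its path and chase $\mathcal{F}_2$ along the already-protected wall at speed~$2$, eventually reuniting with it. With both firefighters now on the same side, containment follows. There is also a separate easy case for fires starting at $(\pm 1,t)$ with $t<0$, which are surrounded directly. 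The quarter-plane reduction is the key manoeuvre you are missing; it is what converts an arbitrary starting position into a situation where the missing column becomes useful.
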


\begin{proof}
    First, if the fire starts on a vertex $(\pm 1,t)$ for some $t \in \mathbb{Z}^-$ then the first firefighter starts directly above the fire and the second starts right of the fire if the fire is at $(1,t)$ and left of the fire if the fire is at $(-1,t)$. The firefighters then build a wall downwards through $(\pm 2,t)$ until they are ahead of the fire which is guaranteed because the firefighters are building the wall two vertices at a time and the fire is moving one vertex at a time. Then the firefighters turn in towards the $y$-axis and have thus surrounded the fire.

    Now suppose the fire starts on a vertex that is not of the form $(\pm 1, t)$. Then the firefighters can restrict the fire to a quarter plane within the modified infinite square grid. This quarter grid can always be set up so that one firefighter ($F_1$) is moving in the negative $y$-direction and the other ($F_2$) is moving away from the missing vertices in the $x$-direction. If $F_1$ is between the fire and the missing vertices, then $F_1$ turns and moves towards the missing vertices until reaching the missing vertices. Otherwise it must be that $F_1$ is on the line $x=0$ and so will continue moving in the negative $y$-direction until reaching the missing vertices. From here it is easy to observe that $F_1$ can retrace their steps and then follow the steps of $F_2$ at twice the speed as before and meet up with $F_2$. At this point the firefighters can clearly contain the fire since it is restricted to a half grid at this point and the firefighters are on one side of the fire.
\end{proof}

\begin{figure}[H]
    \centering
\begin{tikzpicture}[scale=0.5]
    \SetUpVertex[FillColor=white]

    \tikzset{VertexStyle/.append style={minimum size=8pt, inner sep=1pt}}

    \foreach \y in {-3,-2,...,4} {\foreach \x in {-1,0,...,6} {
        \ifnum\x=0
            \ifnum\y>-1
                \Vertex[x=\x,y=\y,NoLabel=true,]{V\x\y}
            \fi
        \else
            \Vertex[x=\x,y=\y,NoLabel=true,]{V\x\y} 
        \fi
    }}
    \foreach[count =\i, evaluate=\i as \z using int(\i-3)] \y in {-3,-2,...,3} {\foreach \x in {-1,0,...,6} {
        \ifnum\x=0
            \ifnum\y>-1
                \ifnum\z>-1
                    \Edge(V\x\y)(V\x\z)
                \fi
            \fi
        \else
            \Edge(V\x\y)(V\x\z)
        \fi
    }}
    \foreach[count =\i, evaluate=\i as \z using int(\i-1)] \x in {-1,0,...,5} {\foreach \y in {-3,-2,...,4} {
        \ifnum\x=0
            \ifnum\y>-1
                \Edge(V\x\y)(V\z\y) 
            \else
                \Vertex[x=1,y=1,NoLabel=true]{V11}
            \fi
        \else
            \ifnum\z=0
                \ifnum\y>-1
                    \Edge(V\x\y)(V\z\y) 
                \else
                    \Vertex[x=1,y=1,NoLabel=true]{V11}
                \fi
            \else
                \Edge(V\x\y)(V\z\y)
            \fi
        \fi
    }}
    
    \tikzset{VertexStyle/.append style={orange, diamond, minimum size = 12pt, text=black, inner sep=0.5pt}}    
    
    \Vertex[x=2,y=2,L={\scriptsize 0}]{V00}
    
    \tikzset{VertexStyle/.append style={circle, minimum size = 8pt, inner sep=1pt}}
    
    \Vertex[x=3,y=2,L={\scriptsize 1}]{V00}
    \Vertex[x=2,y=1,L={\scriptsize 1}]{V00}
    
    \Vertex[x=4,y=2,L={\scriptsize 2}]{V00}
    \Vertex[x=3,y=1,L={\scriptsize 2}]{V00}
    \Vertex[x=2,y=0,L={\scriptsize 2}]{V00}
    
    \Vertex[x=5,y=2,L={\scriptsize 3}]{V00}
    \Vertex[x=4,y=1,L={\scriptsize 3}]{V00}
    \Vertex[x=3,y=0,L={\scriptsize 3}]{V00}
    \Vertex[x=2,y=-1,L={\scriptsize 3}]{V00}
    
    \tikzset{VertexStyle/.append style={black, star, minimum size = 12pt, text=white, inner sep=0.5pt}}    

    \Vertex[x=2,y=3,L={\scriptsize 0}]{V22}
    \Vertex[x=1,y=2,L={\small \sfrac{0}{3}}]{V43}

    \tikzset{VertexStyle/.append style={rectangle, minimum size = 8pt, inner sep=1pt}}

    \Vertex[x=3,y=3,L={\scriptsize 1}]{V22}
    \Vertex[x=1,y=1,L={\scriptsize 1}]{V43}

    \Vertex[x=4,y=3,L={\scriptsize 2}]{V22}
    \Vertex[x=1,y=0,L={\scriptsize 2}]{V43}
    
    \Vertex[x=5,y=3,L={\scriptsize 3}]{V22}

\end{tikzpicture}
\caption{The beginning of the firefighters' strategy for the grid with the path $P$ removed as described in Lemma~\ref{lemma:miss_col}. The vertex with a fraction as the label represents the fact that it was defended both on turn zero and on turn three.}
    \label{fig:11}
\end{figure}    

    It is difficult to remove a simpler infinite connected structure from the grid, so we can see that containing the fire here with two firefighters is very close to being able to contain the fire in the original grid. Based on this it appears that two firefighters are nearly sufficient in the infinite square grid when $d=2$. This notion of being nearly sufficient is further strengthened by Lemma~\ref{lemma:oneextra}.

\begin{lemma} \label{lemma:oneextra}
    In the distance-restricted firefighter game with two firefighters on the infinite square grid with $d=2$, if there is one extra firefighter on an unspecified turn, then the firefighters can contain the fire.
\end{lemma}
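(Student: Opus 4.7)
The plan is to reduce to Theorem~\ref{thm:column}: if the two firefighters can be manoeuvred into a configuration where the fire has been corralled to a column of finite width that is spreading in only one direction, then they can finish the containment. Accordingly, I would use the extra firefighter as the single ingredient that forces the transition from partial containment into a column corral.

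First I would design a steady-state strategy for the two original firefighters that, with no extra help, keeps the fire confined to a half-plane. The natural strategy for $d=2$ is for one firefighter to build the eastward half of a horizontal protective barrier on some fixed row and the other to build the westward half, each moving one vertex outwards per turn while keeping one unit of their distance budget in reserve. Each firefighter protects one new barrier vertex per turn, which matches the fire's speed of advance along that row on either side, so the half-plane corral can be maintained indefinitely.

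When the extra firefighter appears on turn $T$, I would use them to protect a ``seed'' vertex on one of the perpendicular sides of the fire, chosen far enough ahead of the current fire front that it serves as the start of a vertical cap. Over the next several turns I peel one of the two original firefighters off the horizontal barrier and have them extend the cap downward (using the full $d=2$ budget), while the other firefighter continues the horizontal barrier for just long enough to keep the fire from leaking around the transition. Once the cap is in place, a mirror of the same manoeuvre—now with the firefighter who laid the cap free to move—produces the second vertical wall of a column on the opposite side, giving exactly the hypothesis of Theorem~\ref{thm:column}. That theorem then completes the containment.

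The main obstacle is the transition itself and its uniformity in $T$. The fire front at turn $T$ is already of width roughly $2T$, so the cap needs to be seeded sufficiently far beyond the frontier that the firefighter tasked with extending it can arrive within a single $d=2$ move and then keep pace with the fire perpendicular to the barrier. The delicate verification is to show that the distance-$2$ budget simultaneously allows (a) seeding the cap with the extra firefighter, (b) extending the cap to meet the original barrier, and (c) maintaining the horizontal barrier long enough to prevent any escape during the reorganisation—and that these choices can be made for every possible value of $T$. A careful case split according to whether $T$ is small (so the fire is still near the origin and the reorganisation is essentially immediate) or large (so the asymptotic behaviour of the three walls must be tracked) is where the bulk of the work will lie.
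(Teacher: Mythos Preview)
Your reduction to Theorem~\ref{thm:column} is the right endgame, but the steady state you maintain before the extra firefighter arrives is one step too weak, and this makes the transition to a column fail. In a half-plane corral each firefighter is fully committed to one end of the horizontal barrier and is exactly on pace with the fire there. When the extra placement lets the right-hand firefighter turn $90^\circ$ and begin a vertical wall at $x=T+2$, that firefighter must then climb the wall one vertex per turn indefinitely: the fire in column $x=T+1$ is at height $k-1$ precisely when the wall reaches $(T+2,k-1)$, so any skip leaves a gap the fire passes through on the next turn. The cap-builder therefore never becomes ``free'', while the other firefighter is still bound to extend the horizontal barrier leftward. After spending your only extra placement you have produced an L-shaped (quarter-plane) corral, not a column, and your ``mirror manoeuvre'' would require a second extra firefighter to turn the remaining barrier-builder.

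The paper avoids this by starting one step further along: corral the fire to a \emph{quarter} plane from turn~$0$, with the fire at $(1,1)$ and the two firefighters extending the positive $x$- and $y$-axes at speed~$1$. The two walls are already perpendicular, so when the extra firefighter arrives it is enough to let one axis-builder turn $90^\circ$; after that both firefighters are moving in the same direction along parallel lines, which is exactly the column corral hypothesis of Theorem~\ref{thm:column}. This works uniformly in $T$ with no case analysis.
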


\begin{proof}
    Without loss of generality, suppose the fire starts at $(1,1)$. The firefighters ($F_1,F_2$) begin by defending vertices of the form $(0,t)$ and $(t,0)$ respectively at time $t$. Suppose the extra firefighter is available at time $T$. The extra firefighter will defend the vertex $(T,0)$ while $F_1$ defends $(0,T)$ and $F_2$ defends $(T+1,0)$. The vertex $(T+1,1)$ will then burn and the firefighters will then defend the vertices $(0,T+1)$ and $(T+2,1)$ respectively. The fire is now corralled to a column and so the firefighters can contain the fire by Theorem~\ref{thm:column}.
\end{proof}

We can also allow for a slightly different variation on our rules to further solidify that two firefighters are almost sufficient in the case of $d=2$. If instead we require that the two firefighters' distance moved on each turn sums to at most four, then the firefighters can contain the fire given that they can move through the fire on exactly one turn (see Figure~\ref{fig:sum_dist}). In general we define this game with $k$ firefighters by saying that the sum of their distances on any turn is at most $d \cdot k$ and we refer to this modified game as \textit{sum-distance} firefighting.

\begin{figure}[H]
    \centering
\begin{tikzpicture}[scale=0.5]
    \SetUpVertex[FillColor=white]

    \tikzset{VertexStyle/.append style={minimum size=8pt, inner sep=1pt}}
    
    \foreach \y in {-2,-1,...,4} {\foreach \x in {-4,-3,...,2} {\Vertex[x=\x,y=\y,NoLabel=true,]{V\x\y}}}
    \foreach[count =\i, evaluate=\i as \z using int(\i-2)] \y in {-2,-1,...,3} {\foreach \x in {-4,-3,...,2} {\Edge(V\x\y)(V\x\z)}}
    \foreach[count =\i, evaluate=\i as \z using int(\i-4)] \x in {-4,-3,...,1} {\foreach \y in {-2,-1,...,4} {\Edge(V\x\y)(V\z\y)}}

    \tikzset{VertexStyle/.append style={orange, diamond, minimum size=12pt, text=black, inner sep=0.5pt}}
    
    \Vertex[x=0,y=0,L={\scriptsize 0}]{V22}

    \tikzset{VertexStyle/.append style={orange, circle, minimum size=8pt, inner sep=1pt, text=black}}
    
    \Vertex[x=0,y=1,L={\scriptsize 1}]{V22}
    \Vertex[x=1,y=0,L={\scriptsize 1}]{V22}
    
    \tikzset{VertexStyle/.append style={orange, text=black}}
    
    \Vertex[x=0,y=2,L={\scriptsize 2}]{V22}
    \Vertex[x=-1,y=1,L={\scriptsize 2}]{V22}
    \Vertex[x=1,y=1,L={\scriptsize 2}]{V22}
    
    \tikzset{VertexStyle/.append style={orange, text=black}}
    
    \Vertex[x=0,y=3,L={\scriptsize 3}]{V22}
    \Vertex[x=-1,y=2,L={\scriptsize 3}]{V22}
    \Vertex[x=1,y=2,L={\scriptsize 3}]{V22}
    \Vertex[x=-2,y=1,L={\scriptsize 3}]{V22}
    
    \tikzset{VertexStyle/.append style={orange, text=black}}
    
    \Vertex[x=0,y=4,L={\scriptsize 4}]{V22}
    \Vertex[x=-1,y=3,L={\scriptsize 4}]{V22}
    \Vertex[x=1,y=3,L={\scriptsize 4}]{V22}
    \Vertex[x=-2,y=2,L={\scriptsize 4}]{V22}
    \Vertex[x=-3,y=1,L={\scriptsize 4}]{V22}
    \Vertex[x=-2,y=0,L={\scriptsize 4}]{V22}

    \tikzset{VertexStyle/.append style={black, star, minimum size=12pt, text=white, inner sep=0.5pt}}

    \Vertex[x=0,y=-1,L={\scriptsize 0}]{V22}
    \Vertex[x=-1,y=0,L={\scriptsize 0}]{V22}
    
    \tikzset{VertexStyle/.append style={black, rectangle, minimum size=8pt, text=white, inner sep=1pt}}

    \Vertex[x=1,y=-1,L={\scriptsize 1}]{V22}
    \Vertex[x=2,y=0,L={\scriptsize 1}]{V22}

    \tikzset{VertexStyle/.append style={black, text=white}}

    \Vertex[x=-2,y=-1,L={\scriptsize 2}]{V22}
    \Vertex[x=2,y=1,L={\scriptsize 2}]{V22}

    \tikzset{VertexStyle/.append style={black, text=white}}

    \Vertex[x=-3,y=0,L={\scriptsize 3}]{V22}
    \Vertex[x=2,y=2,L={\scriptsize 3}]{V22}

    \tikzset{VertexStyle/.append style={black, text=white}}

    \Vertex[x=-4,y=1,L={\scriptsize 4}]{V22}
    \Vertex[x=2,y=3,L={\scriptsize 4}]{V22}

\end{tikzpicture}
\caption{The initial part of the strategy for containing the fire in the sum-distance game with two firefighters when $d=2$. Observe that the firefighters have the fire corralled to a column at the last turn in the figure, so by Theorem~\ref{thm:column} the firefighters can contain the fire.}
    \label{fig:sum_dist}
\end{figure}    

From Lemma~\ref{lemma:miss_col}, Lemma~\ref{lemma:oneextra}, and Figure~\ref{fig:sum_dist} it is clear that very slight deviations from the problem of containing the fire with two firefighters when $d=2$ results in the firefighters being able to contain the fire. This is despite the fact that it appears to be impossible for two firefighters to contain the fire on the infinite square grid when $d=2$, which we formalize in Conjecture~\ref{conj:two_insuff}.

\begin{restatable}[]{conjecture}{conjb}
\label{conj:two_insuff}
    Two firefighters do not suffice to contain the fire on the infinite square grid when $d=2$.
\end{restatable}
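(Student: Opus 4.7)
The plan is to argue by contradiction: suppose two firefighters $\mathcal{F}_1, \mathcal{F}_2$ contain the fire starting at the origin when $d=2$, and extract a contradiction from the geometry. By vertex transitivity of $G_{\square}$, we may assume the fire begins at $(0,0)$. For each cardinal direction $\hat{e} \in \{N, S, E, W\}$, let $F^{\hat{e}}_t$ denote the extremal projection of the burning set at time $t$ onto $\hat{e}$. Each $F^{\hat{e}}_t$ is nondecreasing, and containment requires every $F^{\hat{e}}_t$ to stabilize at some finite value $f^{\hat{e}}$. The crucial observation is that $F^{\hat{e}}_{t+1} = F^{\hat{e}}_t$ only if an entire row (or column) of protected vertices sits at level $F^{\hat{e}}_t + 1$ spanning the perpendicular extent of the fire at that level. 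These four \emph{barrier conditions} must all be maintained from some finite time onward.

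First I would derive a counting lower bound: the four required barriers have total length at least $2(f^N + f^S + f^E + f^W) + 4$, while at most $2(T+1)$ vertices are ever protected by time $T$. Since almost every protected vertex lies on at most one barrier (with only corner vertices potentially shared), this yields $T = \Omega(f^N + f^S + f^E + f^W)$. This counting is already essentially tight for the standard two-firefighter strategy on $G_{\square}$ in the unrestricted game, so counting alone cannot yield a contradiction; the extra pressure must come from the $d=2$ motion constraint.

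Next I would exploit the following $d=2$-specific fact: each firefighter follows a trajectory $p^0_i, p^1_i, \ldots$ with consecutive positions at induced-graph distance at most $2$ in the unburnt region. Consequently, a firefighter contributing to one barrier cannot simultaneously contribute to another, and transitioning between barriers requires at least one or two turns of transit spent protecting vertices that do not lie on any barrier. I would formalize this via a potential function $\Phi(t)$ equal to the sum over the four directions of the shortfall between the barrier length required for the current fire extent and the barrier length actually built, then seek to show $\Phi(t)$ is bounded below by a positive linear function of $t$. Using the $8$-fold dihedral symmetry about the origin I would reduce the initial moves of any candidate strategy to a handful of equivalence classes. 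By Theorem~\ref{thm:column} it would then suffice to argue that no such strategy can corral the fire to a column of finite width, since any strategy that achieves containment must in particular stabilize two of the four $f^{\hat{e}}$ values.

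The main obstacle — and the reason this remains a conjecture in the paper — is accommodating barriers that are not axis-aligned together with corner sharing. With $d=2$, a firefighter can build diagonal or zig-zag barriers by alternating horizontal and vertical moves, and a vertex placed at a corner of the enclosure can serve two adjacent barriers simultaneously. Rigorously ruling out all such hybrid strategies will likely require either a refined isoperimetric-style inequality bounding the length of any enclosing closed curve in terms of firefighter throughput under the $d=2$ distance constraint, or a finite (possibly computer-assisted) case analysis of firefighter positions during an initial bounded horizon after which a simpler structural argument takes over. I would begin with the latter, since the dihedral symmetry dramatically compresses the search space and since Lemma~\ref{lemma:oneextra} suggests that any failed two-firefighter strategy fails within a relatively short initial window where the fire has not yet been forced into a column.
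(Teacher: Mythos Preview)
The statement you are addressing is not a theorem in the paper; it is Conjecture~\ref{conj:two_insuff}, and the paper offers no proof of it. The surrounding discussion (Theorem~\ref{thm:column}, Lemma~\ref{lemma:miss_col}, Lemma~\ref{lemma:oneextra}, and the sum-distance example) is presented only as evidence that two firefighters are \emph{nearly} sufficient, and the authors then explicitly state the insufficiency as an open conjecture. There is therefore nothing in the paper to compare your attempt against.

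As for the proposal itself, it is a plan rather than a proof, and you recognize this. The counting step you sketch is, as you note, already saturated by the unrestricted two-firefighter strategy, so it cannot by itself yield a contradiction. The potential-function idea is plausible but is not actually carried out: you would need to show $\Phi(t)$ grows without bound, and the obstacles you list---diagonal barriers, corner-sharing, hybrid strategies---are precisely the substantive difficulties that prevent this from being routine. Your appeal to Theorem~\ref{thm:column} is also in the wrong direction: that theorem says corralling to a column \emph{implies} containment, not the converse; the converse is itself Conjecture~\ref{conj:column}, also open. So even granting that no strategy corrals the fire to a column, you could not conclude that no strategy contains the fire without first proving Conjecture~\ref{conj:column}. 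In short, your write-up is an honest outline of where one might look, but it does not close any of the gaps, and the paper makes no claim to have done so either.
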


We further expand upon Conjecture~\ref{conj:two_insuff} below in Conjecture~\ref{conj:three_quart} using a notion from~\cite{gavenciak2014firefighting} with different notation. The notion of saving some portion $\rho \in [0,1]$ of the vertices with some predetermined strategy is defined as $\liminf_{n\to\infty} \frac{|B_n|}{|D_n|} = \rho$. Here $D_n$ is the set of vertices at distance $n$ or less from where the fire broke out and $B_n$ is the set of vertices at distance $n$ or less from where the fire broke out that will eventually burn. In this case we say the firefighters can save $\rho$ of the vertices. 

\begin{restatable}[]{conjecture}{conjc}
\label{conj:three_quart}
    Two firefighters cannot save $\rho$ of the vertices on the infinite square grid when $d=2$ for any $\rho > \frac34$.
\end{restatable}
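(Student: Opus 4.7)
The plan is to show that for any two-firefighter strategy with $d=2$, the asymptotic burned density in $D_n$ is at least $\frac{1}{4}$, equivalently, that no strategy saves more than $\frac{3}{4}$ of the vertices. Since $|D_n|=2n^2+2n+1$ and each of the four $\ell_1$-quadrants of $G_{\square}$ meets $D_n$ in $\sim \frac{n^2}{2}$ vertices, the natural target is to exhibit, for each candidate strategy, a single quadrant whose intersection with $D_n$ is asymptotically burned in full. This matches the heuristic picture motivating Conjecture~\ref{conj:two_insuff}: two firefighters can at best corral the fire into a single quadrant, not into any narrower region.

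I would begin by classifying strategies according to the long-term trajectories of the two firefighters. Since each firefighter is restricted to distance $2$ per turn, its position at time $t$ lies in a ball of radius $2t$ around its starting location, so its trajectory admits only a bounded set of accumulation directions. With only two firefighters and four quadrants to defend, a pigeonhole or averaging argument over annuli of $D_n$ should identify a quadrant $Q$ in which neither firefighter spends more than $o(t)$ of its time, so that the fire inside $Q$ is essentially unopposed.

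The key step, and the main technical obstacle, is to show that the fire inside such a quadrant $Q$ actually attains asymptotic density $1$ in $Q \cap D_n$, rather than escaping along a region of sublinear width. Theorem~\ref{thm:column} handles the extreme case: if the fire were ever corralled into a bounded-width column, the two firefighters could contain it and hence save every vertex, which would prove the conclusion trivially. The remaining case is an intermediate ``sublinearly-widening wedge'' escape into $Q$. For this I would attempt a vertex-isoperimetric argument in the square grid: the boundary of any subset of $D_n$ of area $o(n^2)$ is forced to grow at rate $\omega(1)$ per unit advance, while the two firefighters protect only two vertices per turn, so a barrier maintaining a wedge of sublinear width should eventually be forced to lag behind the advancing fire and be breached.

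I expect the chief difficulty to lie in making the sublinear-wedge case rigorous: one must track how the firefighters distribute their at most $2(n+1)$ protected vertices along the boundary of the burned region and argue that no such distribution confines the burned region to sublinear width unless the fire is in fact contained. A further subtlety is that Conjecture~\ref{conj:three_quart} is strictly stronger than Conjecture~\ref{conj:two_insuff}, so the proof cannot rest on the latter as a hypothesis; ideally one would obtain both conjectures by the same argument, producing a self-contained proof that any two-firefighter strategy either fails to contain the fire or leaves at least $\frac{1}{4}$ of $D_n$ burned asymptotically.
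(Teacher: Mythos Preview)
The statement you are attempting to prove is Conjecture~\ref{conj:three_quart}, which the paper leaves open; there is no proof in the paper to compare against. What you have written is a proof \emph{plan}, not a proof, and you explicitly flag the main step (the ``sublinearly-widening wedge'' case) as unresolved. So as it stands there is a genuine gap: the heart of the argument is the claim that two firefighters cannot confine the fire to a region of area $o(n^2)$ inside some quadrant, and for this you offer only a heuristic appeal to isoperimetry. That heuristic is not obviously decisive, since the firefighters place two vertices per turn while the fire front in a wedge of width $w(n)$ has perimeter of order $w(n)$; if $w(n)$ is bounded you are back in the column case, and if $w(n)\to\infty$ slowly the accounting of who falls behind whom is exactly the delicate point the paper identifies as hard.

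There is also a logical slip in your treatment of the column case. You write that if the fire is corralled to a column then by Theorem~\ref{thm:column} it is contained, ``which would prove the conclusion trivially.'' In fact containment means only finitely many vertices burn, so the saved fraction is $1$, which \emph{refutes} Conjecture~\ref{conj:three_quart} rather than proving it. Conjecture~\ref{conj:three_quart} strictly implies Conjecture~\ref{conj:two_insuff} (non-containment), so any proof of the former must establish the latter along the way; you cannot dispose of the containment branch by saying it makes the conclusion trivial. Your final paragraph gets this right (``the proof cannot rest on the latter as a hypothesis''), but the earlier case analysis contradicts it.
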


\section{Strong Grid} \label{sec:strong}

One of the main results from~\cite{days2019firefighter} shows that eight firefighters are necessary on the infinite strong grid when $d=1$ (i.e. $f_1(G_{\boxtimes}) = 8$), and the strategy illustrated in Figure~\ref{fig:strong_d2_4f} shows that $f_2(G_{\boxtimes}) \leq 4$. It is also known that in the original game four firefighters is the minimum number of firefighters needed to contain a fire on the infinite strong grid~\cite{messinger2005firefighting}. Thus the firefighters cannot contain the fire with three firefighters at any distance. If the distance is increased, the same strategy can be used to contain the fire, so all distances greater than two also require four firefighters. 

\begin{lemma} \label{lemma:strong4ffd2}
    Four firefighters are necessary and sufficient to contain the fire on the infinite strong grid when $d=2$.
\end{lemma}

\begin{proof}
    Refer to Figure~\ref{fig:strong_d2_4f} and Appendix~\ref{app:strong} to see that four firefighters are sufficient. The leftmost firefighter moves across the top of the fire while the other three spiral counterclockwise. To confirm that four firefighters are necessary, see Theorem $22$ from~\cite{messinger2005firefighting} which states that three firefighters do not suffice in the original game and therefore they do not suffice for the distance-restricted game as well. Thus four firefighters are both necessary and sufficient to contain the fire on the infinite strong grid when $d=2$.
\end{proof}

\begin{figure}[H]
    \centering
\begin{tikzpicture}[scale=0.5]
    \SetUpVertex[FillColor=white]

    \tikzset{VertexStyle/.append style={minimum size=8pt, inner sep=1pt}}


    \foreach \y in {0,1,...,8} {\foreach \x in {0,1,...,13} {\Vertex[x=\x,y=\y,NoLabel=true,]{V\x\y}}}
    

    \foreach[count =\i, evaluate=\i as \z using int(\i)] \y in {0,1,...,7} {\foreach \x in {0,1,...,13} {\Edge(V\x\y)(V\x\z)}}
    \foreach[count =\i, evaluate=\i as \z using int(\i)] \x in {0,1,...,12} {\foreach \y in {0,1,...,8} {\Edge(V\x\y)(V\z\y)}}
    

    \foreach[count =\i, evaluate=\i as \z using int(\i)] \y in {0,1,...,7} {\foreach[count=\j, evaluate=\j as \u using int(\j)] \x in {0,1,...,12} {\Edge(V\x\y)(V\u\z)}}
    

    \foreach[count =\i, evaluate=\i as \z using int(\i)] \x in {0,1,...,12} {\foreach[count=\j, evaluate=\j as \u using int(\j-1)] \y in {1,2,...,8} {\Edge(V\x\y)(V\z\u)}}

    \tikzset{VertexStyle/.append style={orange}}
    
    \tikzset{VertexStyle/.append style={diamond, minimum size = 12pt, text=black, inner sep=0.5pt}}
    \Vertex[x=11,y=7,L={\scriptsize 0}]{V00}
    \tikzset{VertexStyle/.append style={circle, minimum size = 8pt, inner sep=1pt}}

    \Vertex[x=10,y=7,L={\scriptsize 1}]{V00}
    \Vertex[x=10,y=6,L={\scriptsize 1}]{V00}
    \Vertex[x=11,y=6,L={\scriptsize 1}]{V00}
    \Vertex[x=12,y=6,L={\scriptsize 1}]{V00}

    \Vertex[x=9,y=7,L={\scriptsize 2}]{V00}
    \Vertex[x=9,y=6,L={\scriptsize 2}]{V00}
    \Vertex[x=9,y=5,L={\scriptsize 2}]{V00}
    \Vertex[x=10,y=5,L={\scriptsize 2}]{V00}
    \Vertex[x=11,y=5,L={\scriptsize 2}]{V00}
    \Vertex[x=12,y=5,L={\scriptsize 2}]{V00}

    \Vertex[x=8,y=7,L={\scriptsize 3}]{V00}
    \Vertex[x=8,y=6,L={\scriptsize 3}]{V00}
    \Vertex[x=8,y=5,L={\scriptsize 3}]{V00}
    \Vertex[x=8,y=4,L={\scriptsize 3}]{V00}
    \Vertex[x=9,y=4,L={\scriptsize 3}]{V00}
    \Vertex[x=10,y=4,L={\scriptsize 3}]{V00}
    \Vertex[x=11,y=4,L={\scriptsize 3}]{V00}

    \Vertex[x=7,y=7,L={\scriptsize 4}]{V00}
    \Vertex[x=7,y=6,L={\scriptsize 4}]{V00}
    \Vertex[x=7,y=5,L={\scriptsize 4}]{V00}
    \Vertex[x=7,y=4,L={\scriptsize 4}]{V00}
    \Vertex[x=7,y=3,L={\scriptsize 4}]{V00}
    \Vertex[x=8,y=3,L={\scriptsize 4}]{V00}
    \Vertex[x=9,y=3,L={\scriptsize 4}]{V00}

    \Vertex[x=6,y=7,L={\scriptsize 5}]{V00}
    \Vertex[x=6,y=6,L={\scriptsize 5}]{V00}
    \Vertex[x=6,y=5,L={\scriptsize 5}]{V00}
    \Vertex[x=6,y=4,L={\scriptsize 5}]{V00}
    \Vertex[x=6,y=3,L={\scriptsize 5}]{V00}
    \Vertex[x=6,y=2,L={\scriptsize 5}]{V00}
    \Vertex[x=7,y=2,L={\scriptsize 5}]{V00}

    \Vertex[x=5,y=7,L={\scriptsize 6}]{V00}
    \Vertex[x=5,y=6,L={\scriptsize 6}]{V00}
    \Vertex[x=5,y=5,L={\scriptsize 6}]{V00}
    \Vertex[x=5,y=4,L={\scriptsize 6}]{V00}
    \Vertex[x=5,y=3,L={\scriptsize 6}]{V00}
    \Vertex[x=5,y=2,L={\scriptsize 6}]{V00}
    \Vertex[x=5,y=1,L={\scriptsize 6}]{V00}

    \Vertex[x=4,y=7,L={\scriptsize 7}]{V00}
    \Vertex[x=4,y=6,L={\scriptsize 7}]{V00}
    \Vertex[x=4,y=5,L={\scriptsize 7}]{V00}
    \Vertex[x=4,y=4,L={\scriptsize 7}]{V00}
    \Vertex[x=4,y=3,L={\scriptsize 7}]{V00}
    \Vertex[x=4,y=2,L={\scriptsize 7}]{V00}
    \Vertex[x=4,y=1,L={\scriptsize 7}]{V00}

    \Vertex[x=3,y=7,L={\scriptsize 8}]{V00}
    \Vertex[x=3,y=6,L={\scriptsize 8}]{V00}
    \Vertex[x=3,y=5,L={\scriptsize 8}]{V00}
    \Vertex[x=3,y=4,L={\scriptsize 8}]{V00}
    \Vertex[x=3,y=3,L={\scriptsize 8}]{V00}
    \Vertex[x=3,y=2,L={\scriptsize 8}]{V00}

    \Vertex[x=2,y=7,L={\scriptsize 9}]{V00}
    \Vertex[x=2,y=6,L={\scriptsize 9}]{V00}
    \Vertex[x=2,y=5,L={\scriptsize 9}]{V00}
    \Vertex[x=2,y=4,L={\scriptsize 9}]{V00}

    \tikzset{VertexStyle/.append style={inner sep=0.1pt}}
    
    \Vertex[x=1,y=7,L={\scriptsize 10}]{V00}
    \Vertex[x=1,y=6,L={\scriptsize 10}]{V00}

    \tikzset{VertexStyle/.append style={black, star, minimum size = 12pt, text=white, inner sep=0.5pt}}
    
    \Vertex[x=10,y=8,L={\scriptsize 0}]{V00}
    
    \Vertex[x=12,y=8,L={\scriptsize 0}]{V00}
    \Vertex[x=11,y=8,L={\scriptsize 0}]{V00}
    \Vertex[x=12,y=7,L={\scriptsize 0}]{V00}
    
    \tikzset{VertexStyle/.append style={rectangle, minimum size = 8pt, inner sep=1pt}}
    
    \Vertex[x=9,y=8,L={\scriptsize 1}]{V00}
    
    \Vertex[x=13,y=7,L={\scriptsize 1}]{V00}
    \Vertex[x=13,y=6,L={\scriptsize 1}]{V00}
    \Vertex[x=13,y=5,L={\small \sfrac{1}{2}}]{V00} 

    \Vertex[x=8,y=8,L={\scriptsize 2}]{V00}
    
    \Vertex[x=13,y=4,L={\scriptsize 2}]{V00}
    \Vertex[x=12,y=4,L={\scriptsize 2}]{V00}

    \Vertex[x=7,y=8,L={\scriptsize 3}]{V00}

    \Vertex[x=12,y=3,L={\scriptsize 3}]{V00}
    \Vertex[x=11,y=3,L={\scriptsize 3}]{V00}
    \Vertex[x=10,y=3,L={\scriptsize 3}]{V00}

    \Vertex[x=6,y=8,L={\scriptsize 4}]{V00}

    \Vertex[x=10,y=2,L={\scriptsize 4}]{V00}
    \Vertex[x=9,y=2,L={\scriptsize 4}]{V00}
    \Vertex[x=8,y=2,L={\scriptsize 4}]{V00}

    \Vertex[x=5,y=8,L={\scriptsize 5}]{V00}

    \Vertex[x=8,y=1,L={\scriptsize 5}]{V00}
    \Vertex[x=7,y=1,L={\scriptsize 5}]{V00}
    \Vertex[x=6,y=1,L={\scriptsize 5}]{V00}

    \Vertex[x=4,y=8,L={\scriptsize 6}]{V00}

    \Vertex[x=6,y=0,L={\scriptsize 6}]{V00}
    \Vertex[x=5,y=0,L={\scriptsize 6}]{V00}
    \Vertex[x=4,y=0,L={\small \sfrac{6}{7}}]{V00} 

    \Vertex[x=3,y=8,L={\scriptsize 7}]{V00}

    \Vertex[x=3,y=0,L={\scriptsize 7}]{V00}
    \Vertex[x=3,y=1,L={\scriptsize 7}]{V00}

    \Vertex[x=2,y=8,L={\scriptsize 8}]{V00}

    \Vertex[x=2,y=1,L={\scriptsize 8}]{V00}
    \Vertex[x=2,y=2,L={\scriptsize 8}]{V00}
    \Vertex[x=2,y=3,L={\scriptsize 8}]{V00}

    \Vertex[x=1,y=8,L={\scriptsize 9}]{V00}

    \Vertex[x=1,y=3,L={\scriptsize 9}]{V00}
    \Vertex[x=1,y=4,L={\scriptsize 9}]{V00}
    \Vertex[x=1,y=5,L={\scriptsize 9}]{V00}

    \Vertex[x=0,y=8,L={\scriptsize 10}]{V00}

    \Vertex[x=0,y=5,L={\scriptsize 10}]{V00}
    \Vertex[x=0,y=6,L={\scriptsize 10}]{V00}
    \Vertex[x=0,y=7,L={\scriptsize 10}]{V00}

\end{tikzpicture}
\caption{A strategy for containment with four firefighters on the infinite strong grid when $d=2$. One firefighter moves across the top while the other three spiral around from the other side. The two vertices with fractions as labels represent the fact that these vertices were defended at both turns one and two and turns six and seven respectively.}
    \label{fig:strong_d2_4f}
\end{figure}

In terms of average firefighting, note that a strategy is given for the infinite strong grid with $3+\frac{1}{T}$ firefighters for any $T \in \mathbb{Z}^+$ in~\cite{messinger2008average}. That is to say, there is a strategy where whenever the turn number is $0$ modulo $T$ there are four firefighters available, and the rest of the time there are three firefighters available. This strategy is almost valid for the $d=2$ game and only needs slight modification to satisfy the restrictions. 

    The initial part of the strategy has the firefighters contain the fire to a quarter plane, but then has one of the firefighters jump from being on the right side to being on the left side. We simply modify this so that instead of maintaining the right wall and moving around the fire from top to bottom on the left, we maintain the top wall and move around the fire from right to left along the bottom. The first four moves of this are drawn below in Figure~\ref{fig:strong_d2_3ef} for the case of $T=2$.

\begin{figure}[H]
    \centering
\begin{tikzpicture}[scale=0.5]
    \SetUpVertex[FillColor=white]

    \tikzset{VertexStyle/.append style={minimum size=8pt, inner sep=1pt}}


    \foreach \y in {2,3,...,8} {\foreach \x in {7,8,...,14} {\Vertex[x=\x,y=\y,NoLabel=true,]{V\x\y}}}
    

    \foreach[count =\i, evaluate=\i as \z using int(\i+2)] \y in {2,3,...,7} {\foreach \x in {7,8,...,14} {\Edge(V\x\y)(V\x\z)}}
    \foreach[count =\i, evaluate=\i as \z using int(\i+7)] \x in {7,8,...,13} {\foreach \y in {2,3,...,8} {\Edge(V\x\y)(V\z\y)}}
    

    \foreach[count =\i, evaluate=\i as \z using int(\i+2)] \y in {2,3,...,7} {\foreach[count=\j, evaluate=\j as \u using int(\j + 7)] \x in {7,8,...,13} {\Edge(V\x\y)(V\u\z)}}
    

    \foreach[count =\i, evaluate=\i as \z using int(\i+7)] \x in {7,8,...,13} {\foreach[count=\j, evaluate=\j as \u using int(\j+1)] \y in {3,4,...,8} {\Edge(V\x\y)(V\z\u)}}

    \tikzset{VertexStyle/.append style={orange}}
    
    \tikzset{VertexStyle/.append style={diamond, minimum size = 12pt, text=black, inner sep=0.5pt}}
    \Vertex[x=11,y=7,L={\scriptsize 0}]{V00}
    \tikzset{VertexStyle/.append style={circle, minimum size = 8pt, inner sep=1pt}}

    \Vertex[x=10,y=7,L={\scriptsize 1}]{V00}
    \Vertex[x=10,y=6,L={\scriptsize 1}]{V00}
    \Vertex[x=11,y=6,L={\scriptsize 1}]{V00}
    \Vertex[x=12,y=6,L={\scriptsize 1}]{V00}

    \Vertex[x=9,y=7,L={\scriptsize 2}]{V00}
    \Vertex[x=9,y=6,L={\scriptsize 2}]{V00}
    \Vertex[x=9,y=5,L={\scriptsize 2}]{V00}
    \Vertex[x=10,y=5,L={\scriptsize 2}]{V00}
    \Vertex[x=11,y=5,L={\scriptsize 2}]{V00}
    \Vertex[x=12,y=5,L={\scriptsize 2}]{V00}
    \Vertex[x=13,y=5,L={\scriptsize 2}]{V00}

    \Vertex[x=8,y=7,L={\scriptsize 3}]{V00}
    \Vertex[x=8,y=6,L={\scriptsize 3}]{V00}
    \Vertex[x=8,y=5,L={\scriptsize 3}]{V00}
    \Vertex[x=8,y=4,L={\scriptsize 3}]{V00}
    \Vertex[x=9,y=4,L={\scriptsize 3}]{V00}
    \Vertex[x=10,y=4,L={\scriptsize 3}]{V00}
    \Vertex[x=11,y=4,L={\scriptsize 3}]{V00}
    \Vertex[x=12,y=4,L={\scriptsize 3}]{V00}
    \Vertex[x=13,y=4,L={\scriptsize 3}]{V00}

    \tikzset{VertexStyle/.append style={black, star, minimum size = 12pt, text=white, inner sep=0.5pt}}
    
    \Vertex[x=10,y=8,L={\scriptsize 0}]{V00}
    
    \Vertex[x=12,y=8,L={\scriptsize 0}]{V00}
    \Vertex[x=11,y=8,L={\scriptsize 0}]{V00}
    \Vertex[x=12,y=7,L={\scriptsize 0}]{V00}
    
    \tikzset{VertexStyle/.append style={rectangle, minimum size = 8pt, inner sep=1pt}}

    \Vertex[x=9,y=8,L={\scriptsize 1}]{V00}
    
    \Vertex[x=13,y=7,L={\scriptsize 1}]{V00}
    \Vertex[x=13,y=6,L={\scriptsize 1}]{V00}

    \Vertex[x=8,y=8,L={\scriptsize 2}]{V00}
    
    \Vertex[x=14,y=6,L={\scriptsize 2}]{V00}
    \Vertex[x=14,y=5,L={\scriptsize 2}]{V00}
    \Vertex[x=14,y=4,L={\scriptsize 2}]{V00}

    \Vertex[x=7,y=8,L={\scriptsize 3}]{V00}
    
    \Vertex[x=13,y=3,L={\scriptsize 3}]{V00}
    \Vertex[x=14,y=3,L={\scriptsize 3}]{V00}

\end{tikzpicture}
    \caption{The start of a strategy for containment with $3 + \frac{1}{T}$ firefighters on the infinite strong grid when $d=2$ and $T=2$.}
    \label{fig:strong_d2_3ef}
\end{figure}
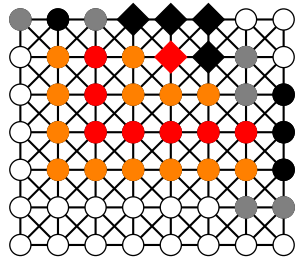

\section{Hexagonal Grid} \label{sec:hex}

The infinite hexagonal grid represents an interesting challenge as the minimum number of firefighters required in the original firefighting game is the subject of Messinger's Conjecture~\cite{messinger2005firefighting}. Messinger's Conjecture states that on the infinite hexagonal grid ($G_{\hexagon}$), one firefighter is insufficient to contain the fire. It was shown in~\cite{gavenciak2014firefighting} that a fire can be contained on the infinite hexagonal grid if there is always one firefighter available except on turns $t_1,t_2$ where there is one additional firefighter. The bound was further improved to only require one extra firefighter on a single turn $t_1$ in~\cite{english2021firefighting}. This presents a new challenge for us as unlike previous cases we do not inherit a good lower bound from the original game.

    The aforementioned challenge is exacerbated in this case due to the fact that the strategies given in~\cite{english2021firefighting} and~\cite{gavenciak2014firefighting} have the firefighter moving further and further as they wait for the extra firefighter to show up. That is to say, if we play the distance-restricted firefighter game on the infinite hexagonal grid with a single firefighter and any finite value of $d$, then there exists a value of $t_1$ (or a pair of values $t_1$ and $t_2$) where if the extra firefighter shows up after time $t_1$ (or after times $t_1$ and $t_2$) then the strategy becomes invalid as the firefighter will have to move a distance greater than $d$. As such there may be no strategy for any finite $d$ where a single extra firefighter on a single turn results in the fire being contained.

    For the case of $d=1$ observe in Figure~\ref{fig:hex_paths} that every vertex has three paths that can be drawn out in a similar way as the four paths from the case of $d=1$ on the infinite square grid. That is to say, a firefighter defending one of these paths cannot defend either of the other two paths. We formalize this by saying that if the vertices are numbered (starting with the $0^{th}$ vertex as the vertex shared by the paths), then the $i^{th}$ vertices in any pair of these paths are at a minimum distance of $i$ from each other. Thus when $d=1$ three firefighters are required and clearly three firefighters suffice.

\begin{lemma}
    Three firefighters are necessary and sufficient to contain the fire on the infinite hexagonal grid when $d=1$.
\end{lemma}

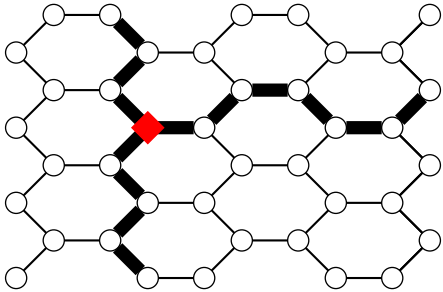
\begin{figure}[H]
    \centering
\begin{tikzpicture}[scale=0.5]
    \GraphInit[vstyle=Classic]
    \SetUpVertex[FillColor=white]

    \tikzset{VertexStyle/.append style={minimum size=8pt, inner sep=1pt}}

	\Vertex[x=0.000000,y=3.000000,NoLabel=true]{V0_3}
	\Vertex[x=0.000000,y=1.000000,NoLabel=true]{V0_1}
	\Vertex[x=0.000000,y=-1.000000,NoLabel=true]{V0_-1}
	\Vertex[x=0.000000,y=-3.000000,NoLabel=true]{V0_-3}
	\Vertex[x=1.000000,y=4.000000,NoLabel=true]{V1_4}
	\Vertex[x=1.000000,y=2.000000,NoLabel=true]{V1_2}
	\Vertex[x=1.000000,y=0.000000,NoLabel=true]{V1_0}
	\Vertex[x=1.000000,y=-2.000000,NoLabel=true]{V1_-2}
	\Vertex[x=-1.500000,y=3.000000,NoLabel=true]{V-2_3}
	\Vertex[x=2.500000,y=4.000000,NoLabel=true]{V2_4}
	\Vertex[x=-1.500000,y=1.000000,NoLabel=true]{V-2_1}
	\Vertex[x=2.500000,y=2.000000,NoLabel=true]{V2_2}
	\Vertex[x=-1.500000,y=-1.000000,NoLabel=true]{V-2_-1}
	\Vertex[x=2.500000,y=0.000000,NoLabel=true]{V2_0}
	\Vertex[x=-1.500000,y=-3.000000,NoLabel=true]{V-2_-3}
	\Vertex[x=2.500000,y=-2.000000,NoLabel=true]{V2_-2}
	\Vertex[x=-2.500000,y=4.000000,NoLabel=true]{V-2_4}
	\Vertex[x=3.500000,y=3.000000,NoLabel=true]{V4_3}
	\Vertex[x=-2.500000,y=2.000000,NoLabel=true]{V-2_2}
	\Vertex[x=3.500000,y=1.000000,NoLabel=true]{V4_1}
	\Vertex[x=-2.500000,y=0.000000,NoLabel=true]{V-2_0}
	\Vertex[x=3.500000,y=-1.000000,NoLabel=true]{V4_-1}
	\Vertex[x=-2.500000,y=-2.000000,NoLabel=true]{V-2_-2}
	\Vertex[x=3.500000,y=-3.000000,NoLabel=true]{V4_-3}
	\Vertex[x=-4.000000,y=4.000000,NoLabel=true]{V-4_4}
	\Vertex[x=5.000000,y=3.000000,NoLabel=true]{V5_3}
	\Vertex[x=-4.000000,y=2.000000,NoLabel=true]{V-4_2}
	\Vertex[x=5.000000,y=1.000000,NoLabel=true]{V5_1}
	\Vertex[x=-4.000000,y=0.000000,NoLabel=true]{V-4_0}
	\Vertex[x=5.000000,y=-1.000000,NoLabel=true]{V5_-1}
	\Vertex[x=-4.000000,y=-2.000000,NoLabel=true]{V-4_-2}
	\Vertex[x=5.000000,y=-3.000000,NoLabel=true]{V5_-3}
	\Vertex[x=-5.000000,y=3.000000,NoLabel=true]{V-5_3}
	\Vertex[x=6.000000,y=4.000000,NoLabel=true]{V6_4}
	\Vertex[x=-5.000000,y=1.000000,NoLabel=true]{V-5_1}
	\Vertex[x=6.000000,y=2.000000,NoLabel=true]{V6_2}
	\Vertex[x=-5.000000,y=-1.000000,NoLabel=true]{V-5_-1}
	\Vertex[x=6.000000,y=0.000000,NoLabel=true]{V6_0}
	\Vertex[x=-5.000000,y=-3.000000,NoLabel=true]{V-5_-3}
	\Vertex[x=6.000000,y=-2.000000,NoLabel=true]{V6_-2}
	\Edge(V0_3)(V-2_3)
	\Edge(V0_3)(V1_4)
	\Edge(V0_3)(V1_2)
	\Edge(V0_1)(V-2_1)
	\Edge(V0_1)(V1_2)
	\Edge(V0_1)(V1_0)
	\Edge(V0_-1)(V-2_-1)
	\Edge(V0_-1)(V1_0)
	\Edge(V0_-1)(V1_-2)
	\Edge(V0_-3)(V-2_-3)
	\Edge(V0_-3)(V1_-2)
	\Edge(V1_4)(V2_4)
	\Edge(V1_2)(V2_2)
	\Edge(V1_0)(V2_0)
	\Edge(V1_-2)(V2_-2)
	\Edge(V-2_3)(V-2_4)
	\Edge(V-2_3)(V-2_2)
	\Edge(V2_4)(V4_3)
	\Edge(V-2_1)(V-2_2)
	\Edge(V-2_1)(V-2_0)
	\Edge(V2_2)(V4_3)
	\Edge(V2_2)(V4_1)
	\Edge(V-2_-1)(V-2_0)
	\Edge(V-2_-1)(V-2_-2)
	\Edge(V2_0)(V4_1)
	\Edge(V2_0)(V4_-1)
	\Edge(V-2_-3)(V-2_-2)
	\Edge(V2_-2)(V4_-1)
	\Edge(V2_-2)(V4_-3)
	\Edge(V-2_4)(V-4_4)
	\Edge(V4_3)(V5_3)
	\Edge(V-2_2)(V-4_2)
	\Edge(V4_1)(V5_1)
	\Edge(V-2_0)(V-4_0)
	\Edge(V4_-1)(V5_-1)
	\Edge(V-2_-2)(V-4_-2)
	\Edge(V4_-3)(V5_-3)
	\Edge(V-4_4)(V-5_3)
	\Edge(V5_3)(V6_4)
	\Edge(V5_3)(V6_2)
	\Edge(V-4_2)(V-5_3)
	\Edge(V-4_2)(V-5_1)
	\Edge(V5_1)(V6_2)
	\Edge(V5_1)(V6_0)
	\Edge(V-4_0)(V-5_1)
	\Edge(V-4_0)(V-5_-1)
	\Edge(V5_-1)(V6_0)
	\Edge(V5_-1)(V6_-2)
	\Edge(V-4_-2)(V-5_-1)
	\Edge(V-4_-2)(V-5_-3)
	\Edge(V5_-3)(V6_-2)
	\Edge(V6_4)(V5_3)
	\Edge(V6_2)(V5_3)
	\Edge(V6_2)(V5_1)
	\Edge(V6_0)(V5_1)
	\Edge(V6_0)(V5_-1)
	\Edge(V6_-2)(V5_-1)
	\Edge(V6_-2)(V5_-3)

    \tikzset{VertexStyle/.append style={orange, diamond, minimum size=1pt}}
	
    \Vertex[x=-1.500000,y=1.000000,NoLabel=true]{V-2_1}

    \tikzset{EdgeStyle/.append style={line width=5pt}}   

	\Edge(V-2_1)(V-2_2)
	\Edge(V-2_1)(V-2_0)
	\Edge(V-2_1)(V0_1)
	
    \Edge(V-2_2)(V-2_3)
    \Edge(V-2_0)(V-2_-1)
    \Edge(V0_1)(V1_2)

    \Edge(V-2_3)(V-2_4)
    \Edge(V-2_-1)(V-2_-2)
    \Edge(V1_2)(V2_2)

    \Edge(V-2_-2)(V-2_-3)
    \Edge(V2_2)(V4_1)
    
    \Edge(V4_1)(V5_1)
    
    \Edge(V5_1)(V6_2)
    
    \tikzset{VertexStyle/.append style={orange, diamond, minimum size=12pt}}
	
    \Vertex[x=-1.500000,y=1.000000,NoLabel=true]{V-2_1}

\end{tikzpicture}
\caption{The three paths used to show that two firefighters do not suffice on the infinite hexagonal grid when $d=1$.}
    \label{fig:hex_paths}
\end{figure}    
   
    Now observe that in the case of $d=2$ the fire is contained in five turns as illustrated in Figure~\ref{fig:hex_d2_2f}. For any distance greater than two the same strategy can be applied and thus two firefighters are also sufficient when $d > 2$.

\begin{lemma} \label{lemma:hex2ffd2}
    Two firefighters suffice to contain the fire on the infinite hexagonal grid when $d \geq 2$.
\end{lemma}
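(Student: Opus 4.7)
The plan is to exhibit an explicit two-firefighter strategy for the case $d = 2$, shown in Figure~\ref{fig:hex_d2_2f}, that contains the fire within five turns, and then observe that the identical sequence of placements is legitimate for every $d > 2$. By vertex-transitivity of $G_{\hexagon}$, we may fix the initial fire at any vertex $v_0$, which has degree exactly three.

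On turn $0$ I would place the two firefighters close to $v_0$ so that together they already block two of the three directions in which the fire could spread; on each subsequent turn each firefighter moves at most distance $2$ (in the subgraph induced by the unburnt vertices) to take up the next position of a closing barrier around the fire. The key structural feature of $G_{\hexagon}$ that makes two firefighters suffice here (in contrast with the $d=1$ case discussed above) is its low degree: once one of $v_0$'s three neighbours is blocked, each newly burning vertex at the frontier has at most two unburnt neighbours outside the existing barrier, and two moves of distance at most $2$ per turn are enough to plug them before the fire reaches them.

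The verification of the strategy drawn in Figure~\ref{fig:hex_d2_2f} then reduces to two routine per-turn checks: (i) for each firefighter and each pair of consecutive turns, the two positions lie within distance $2$ in the subgraph induced by the currently unburnt vertices; and (ii) on turn $5$ every burning vertex has all of its neighbours either burning or protected, so the fire is contained. I expect the only obstacle here to be the bookkeeping of walking through all five turns of the figure, rather than any conceptual difficulty; no nontrivial lower bound is required, since the lemma only claims sufficiency.

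Finally, the extension to arbitrary $d > 2$ is immediate: the same sequence of firefighter positions automatically satisfies the weaker distance-$d$ restriction, because any move of distance at most $2$ is in particular a move of distance at most $d$. Hence two firefighters suffice for every $d \geq 2$.
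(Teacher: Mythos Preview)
Your proposal is correct and follows essentially the same approach as the paper: the paper's entire proof is ``See Figure~\ref{fig:hex_d2_2f},'' and you likewise point to that explicit five-turn strategy for $d=2$ and then observe that the same sequence of moves is a fortiori legal for any $d>2$. Your additional remarks about vertex-transitivity, the degree-three structure, and the two routine per-turn checks are just an expanded narration of what the figure encodes; there is no substantive difference in method.
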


\begin{proof}
    See Figure~\ref{fig:hex_d2_2f}.
\end{proof}

\begin{figure}[H]
    \centering
\begin{tikzpicture}[scale=0.5]
    \SetUpVertex[FillColor=white]

    \tikzset{VertexStyle/.append style={minimum size=8pt, inner sep=1pt}}

	\Vertex[x=0.000000,y=3.000000,NoLabel=true]{V0_3}
	\Vertex[x=0.000000,y=1.000000,NoLabel=true]{V0_1}
	\Vertex[x=0.000000,y=-1.000000,NoLabel=true]{V0_-1}
	\Vertex[x=0.000000,y=-3.000000,NoLabel=true]{V0_-3}
	\Vertex[x=1.000000,y=4.000000,NoLabel=true]{V1_4}
	\Vertex[x=1.000000,y=2.000000,NoLabel=true]{V1_2}
	\Vertex[x=1.000000,y=0.000000,NoLabel=true]{V1_0}
	\Vertex[x=1.000000,y=-2.000000,NoLabel=true]{V1_-2}
	\Vertex[x=-1.500000,y=3.000000,NoLabel=true]{V-2_3}
	\Vertex[x=2.500000,y=4.000000,NoLabel=true]{V2_4}
	\Vertex[x=-1.500000,y=1.000000,NoLabel=true]{V-2_1}
	\Vertex[x=2.500000,y=2.000000,NoLabel=true]{V2_2}
	\Vertex[x=-1.500000,y=-1.000000,NoLabel=true]{V-2_-1}
	\Vertex[x=2.500000,y=0.000000,NoLabel=true]{V2_0}
	\Vertex[x=-1.500000,y=-3.000000,NoLabel=true]{V-2_-3}
	\Vertex[x=2.500000,y=-2.000000,NoLabel=true]{V2_-2}
	\Vertex[x=-2.500000,y=4.000000,NoLabel=true]{V-2_4}
	\Vertex[x=3.500000,y=3.000000,NoLabel=true]{V4_3}
	\Vertex[x=-2.500000,y=2.000000,NoLabel=true]{V-2_2}
	\Vertex[x=3.500000,y=1.000000,NoLabel=true]{V4_1}
	\Vertex[x=-2.500000,y=0.000000,NoLabel=true]{V-2_0}
	\Vertex[x=3.500000,y=-1.000000,NoLabel=true]{V4_-1}
	\Vertex[x=-2.500000,y=-2.000000,NoLabel=true]{V-2_-2}
	\Vertex[x=3.500000,y=-3.000000,NoLabel=true]{V4_-3}
	\Vertex[x=-4.000000,y=4.000000,NoLabel=true]{V-4_4}
	\Vertex[x=5.000000,y=3.000000,NoLabel=true]{V5_3}
	\Vertex[x=-4.000000,y=2.000000,NoLabel=true]{V-4_2}
	\Vertex[x=5.000000,y=1.000000,NoLabel=true]{V5_1}
	\Vertex[x=-4.000000,y=0.000000,NoLabel=true]{V-4_0}
	\Vertex[x=5.000000,y=-1.000000,NoLabel=true]{V5_-1}
	\Vertex[x=-4.000000,y=-2.000000,NoLabel=true]{V-4_-2}
	\Vertex[x=5.000000,y=-3.000000,NoLabel=true]{V5_-3}
	\Vertex[x=-5.000000,y=3.000000,NoLabel=true]{V-5_3}
	\Vertex[x=6.000000,y=4.000000,NoLabel=true]{V6_4}
	\Vertex[x=-5.000000,y=1.000000,NoLabel=true]{V-5_1}
	\Vertex[x=6.000000,y=2.000000,NoLabel=true]{V6_2}
	\Vertex[x=-5.000000,y=-1.000000,NoLabel=true]{V-5_-1}
	\Vertex[x=6.000000,y=0.000000,NoLabel=true]{V6_0}
	\Vertex[x=-5.000000,y=-3.000000,NoLabel=true]{V-5_-3}
	\Vertex[x=6.000000,y=-2.000000,NoLabel=true]{V6_-2}
	\Edge(V0_3)(V-2_3)
	\Edge(V0_3)(V1_4)
	\Edge(V0_3)(V1_2)
	\Edge(V0_1)(V-2_1)
	\Edge(V0_1)(V1_2)
	\Edge(V0_1)(V1_0)
	\Edge(V0_-1)(V-2_-1)
	\Edge(V0_-1)(V1_0)
	\Edge(V0_-1)(V1_-2)
	\Edge(V0_-3)(V-2_-3)
	\Edge(V0_-3)(V1_-2)
	\Edge(V1_4)(V2_4)
	\Edge(V1_2)(V2_2)
	\Edge(V1_0)(V2_0)
	\Edge(V1_-2)(V2_-2)
	\Edge(V-2_3)(V-2_4)
	\Edge(V-2_3)(V-2_2)
	\Edge(V2_4)(V4_3)
	\Edge(V-2_1)(V-2_2)
	\Edge(V-2_1)(V-2_0)
	\Edge(V2_2)(V4_3)
	\Edge(V2_2)(V4_1)
	\Edge(V-2_-1)(V-2_0)
	\Edge(V-2_-1)(V-2_-2)
	\Edge(V2_0)(V4_1)
	\Edge(V2_0)(V4_-1)
	\Edge(V-2_-3)(V-2_-2)
	\Edge(V2_-2)(V4_-1)
	\Edge(V2_-2)(V4_-3)
	\Edge(V-2_4)(V-4_4)
	\Edge(V4_3)(V5_3)
	\Edge(V-2_2)(V-4_2)
	\Edge(V4_1)(V5_1)
	\Edge(V-2_0)(V-4_0)
	\Edge(V4_-1)(V5_-1)
	\Edge(V-2_-2)(V-4_-2)
	\Edge(V4_-3)(V5_-3)
	\Edge(V-4_4)(V-5_3)
	\Edge(V5_3)(V6_4)
	\Edge(V5_3)(V6_2)
	\Edge(V-4_2)(V-5_3)
	\Edge(V-4_2)(V-5_1)
	\Edge(V5_1)(V6_2)
	\Edge(V5_1)(V6_0)
	\Edge(V-4_0)(V-5_1)
	\Edge(V-4_0)(V-5_-1)
	\Edge(V5_-1)(V6_0)
	\Edge(V5_-1)(V6_-2)
	\Edge(V-4_-2)(V-5_-1)
	\Edge(V-4_-2)(V-5_-3)
	\Edge(V5_-3)(V6_-2)
	\Edge(V6_4)(V5_3)
	\Edge(V6_2)(V5_3)
	\Edge(V6_2)(V5_1)
	\Edge(V6_0)(V5_1)
	\Edge(V6_0)(V5_-1)
	\Edge(V6_-2)(V5_-1)
	\Edge(V6_-2)(V5_-3)

    \tikzset{VertexStyle/.append style={orange, diamond, minimum size=12pt, text=black, inner sep=0.5pt}}
	
    \Vertex[x=-1.500000,y=1.000000,L={\scriptsize 0}]{V0_1}

    \tikzset{VertexStyle/.append style={circle, minimum size=8pt, inner sep=1pt}}

    \Vertex[x=0.00000,y=1.000000,L={\scriptsize 1}]{V0_1}

    \Vertex[x=1.00000,y=2.000000,L={\scriptsize 2}]{V0_1}
    \Vertex[x=1.00000,y=0.000000,L={\scriptsize 2}]{V0_1}

    \Vertex[x=2.50000,y=2.000000,L={\scriptsize 3}]{V0_1}
    \Vertex[x=2.50000,y=0.000000,L={\scriptsize 3}]{V0_1}

    \Vertex[x=3.50000,y=3.000000,L={\scriptsize 4}]{V0_1}
    \Vertex[x=3.50000,y=1.000000,L={\scriptsize 4}]{V0_1}
    \Vertex[x=3.50000,y=-1.000000,L={\scriptsize 4}]{V0_1}

    \tikzset{VertexStyle/.append style={black, star, text=white, minimum size=12pt, inner sep=0.5pt}}
   
    \Vertex[x=-2.500000,y=2.000000,L={\scriptsize 0}]{V0_1}
    \Vertex[x=-2.500000,y=0.000000,L={\scriptsize 0}]{V0_1}

    \tikzset{VertexStyle/.append style={rectangle, minimum size=8pt, inner sep=1pt}}

    \Vertex[x=0.000000,y=3.000000,L={\scriptsize 1}]{V0_1}
    \Vertex[x=0.000000,y=-1.000000,L={\scriptsize 1}]{V0_1}

    \Vertex[x=2.500000,y=4.000000,L={\scriptsize 2}]{V0_1}
    \Vertex[x=2.500000,y=-2.000000,L={\scriptsize 2}]{V0_1}

    \Vertex[x=5.000000,y=3.000000,L={\scriptsize 3}]{V0_1}
    \Vertex[x=5.000000,y=-1.000000,L={\scriptsize 3}]{V0_1}

    \Vertex[x=5.000000,y=1.000000,L={\scriptsize 4}]{V0_1}

\end{tikzpicture}
\caption{A strategy for containment with two firefighters on the infinite hexagonal grid when $d=2$.}
    \label{fig:hex_d2_2f}
\end{figure}
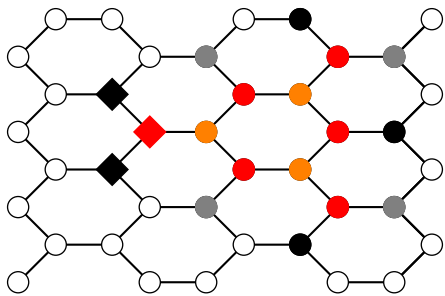    

    We also conjecture that two firefighters are necessary to contain the fire in the case of $d=2$ which we formalize in Conjecture~\ref{conj:two_nec}. This conjecture is a weaker version of Messinger's Conjecture, since if one firefighter is insufficient for the original game then it is certainly the case that one firefighter is insufficient for the distance-restricted game. Thus at least two firefighters would be necessary as stated in Conjecture~\ref{conj:two_nec}.

\begin{restatable}[]{conjecture}{conjd}
\label{conj:two_nec}
    One firefighter does not suffice to contain the fire in the infinite hexagonal grid when $d=2$.
\end{restatable}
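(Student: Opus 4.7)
The plan is to show that a single firefighter moving at distance at most 2 cannot build a barrier that keeps up with the growth of the fire on the hexagonal grid. The argument combines the three-path obstruction used above for the $d=1$ case with a perimeter-versus-time accounting that exploits the $d=2$ constraint specifically. A first step is to strengthen the distance property of the three paths $P_1, P_2, P_3$ emanating from the fire origin $v_0$ in Figure~\ref{fig:hex_paths}: we want a bound of the form $d(p_{i,j_1}, p_{k,j_2}) \geq j_1 + j_2$ for $i \neq k$, which holds when the three paths are chosen as geodesics out of $v_0$ in pairwise ``opposite'' directions, so that $v_0$ lies on a geodesic between any two path vertices. Verifying that such a choice is realizable in the hexagonal grid is a concrete and elementary geometric check.

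Next, I would argue that containment forces the firefighter to eventually place a protected vertex on each of $P_1, P_2, P_3$, at some turns $t_1 < t_2 < t_3$ and vertices $p_{i, j_i}$ with $j_i \geq t_i$ (otherwise the fire has already escaped past the purported blocking point on that path). Because the firefighter cannot pass through burnt vertices, the trajectory from $p_{1,j_1}$ to $p_{2,j_2}$ must go around the burnt region, whose radius is already at least $t_1$. The effective distance to traverse is therefore at least the arc length around a hex-disk of radius $t_1$ subtending a $120^\circ$ sector, which is linear in $t_1$. Combined with the speed bound, this forces $t_{i+1} \geq (1+c)\, t_i$ for some constant $c > 0$, so the blocking times grow geometrically.

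Finally, I would upgrade the three-path bottleneck to a full containment obstruction. Blocking three isolated vertices does not contain the fire, so the protected vertices must eventually form a closed curve around the burnt region; in the hex grid such a curve at time $t$ has length at least a fixed positive constant times $t$, while the firefighter has placed only $t+1$ protected vertices in all. The plan is to show that the three-path requirement forces $t_3$ to be so large that the remaining perimeter the firefighter must still complete exceeds the number of turns left before the fire overtakes any partially built wall, yielding the desired contradiction.

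The hard part is to make this accounting formally airtight. The firefighter could interleave path-blocking and perimeter-building in subtle ways, could ``sacrifice'' some sectors in order to reinforce barriers elsewhere, and may leverage the fact that already-burnt vertices pose no further threat. Ruling out every such hybrid strategy, particularly one that defers the third path-block until very late or that builds two walls simultaneously on opposite sides of the firefighter's trajectory, is the principal technical obstacle, and is presumably why the authors state this as a conjecture rather than a theorem.
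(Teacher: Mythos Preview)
This statement is a \emph{conjecture}; the paper does not prove it, and you correctly identify at the end of your proposal that the accounting cannot currently be made airtight. So there is no proof in the paper to compare against, only a suggested line of attack.

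That said, there is a structural reason your sketch cannot succeed as written. Your perimeter-versus-time step (``a closed curve around the burnt region at time $t$ has length at least a constant times $t$, while only $t+1$ vertices have been protected'') does not use the $d=2$ restriction at all; if it went through, it would already prove Messinger's Conjecture that one \emph{unrestricted} firefighter is insufficient on the hexagonal grid. But Messinger's Conjecture is open, and in fact one firefighter plus a single extra placement on one turn \emph{does} contain the fire~\cite{english2021firefighting}, so the perimeter accounting is provably too coarse by exactly one vertex. Similarly, your claim that ``the burnt region has radius at least $t_1$'' is false in general: the firefighter has been placing protections all along, so the fire need not be a full hex-disk, and the go-around distance can be much smaller than you assume. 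The three-path part of the argument does use $d=2$, but as you note, blocking three isolated vertices is far from containment, so the part of your outline that carries the real weight is precisely the part that ignores the hypothesis.

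The paper's suggested approach is different and is tailored to $d=2$ in a way yours is not: when a firefighter on the hexagonal grid moves distance exactly $2$, the start and end vertices have a unique common neighbour, and that vertex becomes effectively protected (all but one of its neighbours are already protected). Hence every $d=2$ strategy is equivalent to protecting the vertices of a \emph{walk} in the grid. The authors propose using this to reduce the conjecture to showing that a spiralling walk cannot contain the fire, which is a much more constrained problem than analysing arbitrary placements. If you want to pursue the conjecture, exploiting this walk-equivalence is likely a more promising route than the isoperimetric argument.
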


    This special case of Messinger's Conjecture has a nice property which should make a proof more attainable. Namely, whenever the firefighter moves a distance of two its start and end point have exactly one neighbour in common. This common neighbour vertex now has all but one of its neighbours protected, and so this vertex is essentially protected since the fire reaching this vertex does not change anything in terms of how much the fire will spread as it cannot spread from this vertex to any new vertices. So in essence, any strategy that the firefighter employs is equivalent to protecting the vertices of some walk in the grid. It is our belief that this property could be used to show that any strategy being successful implies that a spiraling strategy must also be successful, thus allowing a proof of this conjecture to be built from a proof that the spiraling strategy does not work.

\section{Discussion and Open Problems} \label{sec:conclusions}

    Throughout this paper we have seen many upper bounds on $f_d(G,u)$ for fixed $G$ and $d$, and progress has been made towards establishing some lower bounds to complement them. Lower bounds are of particular interest in general, and especially for this game. The proofs of these lower bounds cannot make use of widely used tools like Fogarty's Hall-type condition~\cite{fogarty2003catching} as the firefighters' positions are no longer a sequence of arbitrary moves. The way that every move affects all moves that follow can make the problem of distance-restricted firefighting more difficult than the original game.

    Alongside these questions about lower bounds for particular graphs, there are also questions about the game in general. For example we initially wondered if every infinite, planar\footnote{By planar here we mean every finite subgraph is planar. If we also require that the embedding in $\mathbb{R}^2$ has no accumulation points then the problem is still open.}, $k$-regular, vertex-transitive graph would require $k$ firefighters to contain the fire when $d=1$. However if we consider $P^{\mathbb{Z}}\:\square\:C^{n}$ for any $n \in \{3,4,5,\ldots\}$, observe that two firefighters can contain the fire by starting far enough away from the fire along $P^{\mathbb{Z}}$ in either direction and then forming a protective barrier in the form of two $n$-cycles. Moreover, for any vertex in the graph, the subgraph induced by the set of vertices within distance $\left\lfloor \frac{n-2}{2} \right\rfloor$ is isomorphic to the subgraph induced by the same process for any vertex in the infinite square grid.

    Another question we have is under which conditions $f_d(H,u) \leq f_d(G,u)$ for $H$ a subgraph of $G$ and $u \in V(H)$. We have seen that this does not hold in general, but if we restrict ourselves to the strong, square, hexagonal and our subdivided infinite hexagonal grid as $G$ and $H$ then the property does hold. All these examples of the inequality not holding have to do with the fact that, unlike in the original model, the set of vertices the firefighters can eventually reach is reduced if the set of burnt vertices is a vertex cut. Due to this fact, the inequality likely involves the connectedness of both $G$ and $H$, potentially in relation to the degree of their vertices. For example, all of our grids except the subdivided infinite hexagonal grid are $k$-regular and $k$-connected, but the subdivided infinite hexagonal grid is not regular. So it is possible that $G$ and $H$ being $k$-regular and $k$-connected (for possibly different values of $k$) could imply that $f_d(H,u) \leq f_d(G,u)$, but a different condition than regularity would be needed for a full characterization.

    The final more general question we pose is how the behaviour of the game changes if we look at the game with an infinite value of $d$ while still maintaining that the firefighters not move through the fire. Certainly the results from Theorem~\ref{thm:arb_diff_2} and Theorem~\ref{thm:arb_diff} still hold, but the behaviour of the game in general could sit somewhere in between the original game and the distance-restricted game with finite $d$. 

    We of course also have the conjectures we made through the paper which we reiterate here.

    \conja*
    \conjb*
    \conjc*
    \conjd*

    We consider Conjectures~\ref{conj:two_insuff} and~\ref{conj:three_quart} to be the more interesting conjectures as Conjecture~\ref{conj:column} is something that would likely be proven in order to then prove the other two conjectures from Section~\ref{sec:square}. Even if Conjecture~\ref{conj:three_quart} is unsolved, Conjecture~\ref{conj:two_insuff} would still be an interesting result as it would represent a new proof of a lower bound in firefighting that is non-trivial and does not use Fogarty's Hall-type condition. 

    Conjecture~\ref{conj:two_nec} would also be an interesting result, especially if Messinger's conjecture was shown to be false. If Messinger's conjecture were false, Conjecture~\ref{conj:two_nec} would remain open and would open up a new question of which values of $d$ permit a strategy where one firefighter can contain the fire on the infinite hexagonal grid. Alternatively, proving Conjecture~\ref{conj:two_nec} would imply that any counterexample to Messinger's conjecture would require the firefighter to move a distance greater than two at least once. In any case, Conjecture~\ref{conj:two_nec} could lead to some interesting explorations into how the game behaves when the firefighters' strategy can be considered as a walk.

\section*{Acknowledgements}

\noindent Authors A.C. Burgess and D.A. Pike acknowledge research support from NSERC Discovery Grants 2019-04328 and RGPIN-2022-03829, respectively. Author J. Marcoux acknowledges support from Memorial University of Newfoundland as a graduate student at the time this paper was initially written.

\section*{Additional Statements}

    \noindent Competing interests: The authors declare none.
    \par
    \noindent Data availability: There is no data associated with this article.

\appendixpage

\appendix
\section{Full Strategy for proof of Lemma~\ref{lemma:strong4ffd2}} \label{app:strong}

Below is the list of sets describing which vertices the firefighters need to defend to contain the fire on the infinite strong grid in the distance-restricted game with $d=2$. We assume the fire begins at $(13,8)$.

\begin{align*}
    &\textit{POS}_0 = \{(1,(12,9)),(1,(13,9)),(1,(14,9)),(1,(14,8))\}\\
    &\textit{POS}_1 = \{(1,(11,9)),(1,(15,8)),(1,(15,7)),(1,(15,6))\}\\
    &\textit{POS}_2 = \{(1,(10,9)),(1,(15,6)),(1,(15,5)),(1,(14,5))\}\\
    &\textit{POS}_3 = \{(1,(9,9)),(1,(14,4)),(1,(13,4)),(1,(12,4))\}\\
    &\textit{POS}_4 = \{(1,(8,9)),(1,(12,3)),(1,(11,3)),(1,(10,3))\}\\
    &\textit{POS}_5 = \{(1,(7,9)),(1,(10,2)),(1,(9,2)),(1,(8,2))\}\\
    &\textit{POS}_6 = \{(1,(6,9)),(1,(8,1)),(1,(7,1)),(1,(6,1))\}\\
    &\textit{POS}_7 = \{(1,(5,9)),(1,(6,1)),(1,(5,1)),(1,(5,2))\}\\
    &\textit{POS}_8 = \{(1,(4,9)),(1,(4,2)),(1,(4,3)),(1,(4,4))\}\\
    &\textit{POS}_9 = \{(1,(3,9)),(1,(3,4)),(1,(3,5)),(1,(3,6))\}\\
    &\textit{POS}_{10} = \{(1,(2,9)),(1,(2,6)),(1,(2,7)),(1,(2,8))\}
\end{align*}

\bibliographystyle{amsplainnodash}
\bibliography{bibliography}

\end{document}